\newtheorem{theorem}{\textbf{Theorem}}[section]
\newtheorem{lemma}{\textbf{Lemma}}[section]
\newtheorem{proposition}{\textbf{Proposition}}[section]
\newtheorem{corollary}{\textbf{Corollary}}[section]
\newtheorem{remark}{\textbf{Remark}}[section]
\newtheorem{definition}{\textbf{Definition}}[section]
\def\be{\begin{equation}}
\def\ee{\end{equation}}
\def\bea{\begin{eqnarray}}
\def\eea{\end{eqnarray}}
\def\bt{\begin{theorem}}
\def\et{\end{theorem}}
\def\bl{\begin{lemma}}
\def\el{\end{lemma}}
\def\br{\begin{remark}}
\def\er{\end{remark}}
\def\bp{\begin{proposition}}
\def\ep{\end{proposition}}
\def\bc{\begin{corollary}}
\def\ec{\end{corollary}}
\def\bd{\begin{definition}}
\def\ed{\end{definition}}
\begin{document}

\title{On the Viscous Cahn--Hilliard--Oono System with Chemotaxis and Singular Potential}

\author{
{Jingning He}
\footnote{School of Mathematical Sciences,
Fudan University, Handan Road 220, Shanghai 200433, China. Email:  \textit{jingninghe2020@gmail.com}
}}

\date{\today}

\maketitle


\begin{abstract}
\noindent We analyze a diffuse interface model that couples a viscous Cahn--Hilliard equation for the phase variable with a diffusion-reaction equation for the nutrient concentration. The system under consideration also takes into account some important mechanisms like chemotaxis, active transport as well as nonlocal interaction of Oono's type. When the spatial dimension is three, we prove the existence and uniqueness of global weak solutions to the model with singular potentials including the physically relevant logarithmic potential. Then we obtain some regularity properties of the weak solutions when $t>0$. In particular, with the aid of the viscous term, we prove the so-called instantaneous separation property of the phase variable such that it stays away from the pure states $\pm1$ as long as $t>0$. Furthermore, we study long-time behavior of the system, by proving the existence of a global attractor and characterizing its $\omega$-limit set.\medskip

\textbf{Keywords:} Cahn--Hilliard--Oono equation, chemotaxis, singular potential, well-posedness,  long-time behavior.

\textbf{MSC 2010:} 35A01, 35A02, 35K35, 35Q92, 76D05.
\end{abstract}

\section{Introduction}
\setcounter{equation}{0}
\noindent
In this paper, we consider the following system of partial differential equations
\begin{subequations}
\begin{alignat}{3}
&\partial_t \varphi=\Delta \mu-\alpha(\varphi-c_0),\ &\textrm{in}\ \Omega\times(0,+\infty),\label{f1.a} \\
&\mu=A\varPsi'(\varphi)-B\Delta \varphi-\chi \sigma+\epsilon\partial_t \varphi, \quad\quad &\textrm{in}\ \Omega\times(0,+\infty),\label{f4.d} \\
&\partial_t \sigma= \Delta (\sigma+\chi(1-\varphi)),\ &\textrm{in}\ \Omega\times(0,+\infty), \label{f2.b}
\end{alignat}
\end{subequations}
subject to the boundary conditions
\begin{alignat}{3}
{\partial}_{\bm{n}}\varphi={\partial}_{\bm{n}}\mu={\partial}_{\bm{n}}\sigma=0,\qquad\qquad &\textrm{on}& \   \partial\Omega\times(0,+\infty),
\label{boundary}
\end{alignat}
as well as initial conditions
\begin{alignat}{3}
&\varphi(0)=\varphi_{0},\ \ \sigma(0)=\sigma_{0}, \qquad &\textrm{in}&\ \Omega.
\label{ini0}
\end{alignat}
Here, $\Omega \subset\mathbb{R}^3$ is a bounded domain with smooth boundary $\partial\Omega$ and $\bm{n}=\bm{n}(x)$ denotes the unit outward normal vector on $\partial\Omega$.

The system \eqref{f1.a}--\eqref{f2.b} can be viewed as a simplified, fluid-free version of the general thermodynamically consistent diffuse
interface model derived in \cite{LW} for a two-phase incompressible fluid mixture with a chemical
species subject to some important mechanisms like diffusion, chemotaxis interaction and active transport.
The order parameter $\varphi$ denotes the difference of volume fractions for the two components, while $\sigma$ standards for the concentration of the nutrient. The function $\mu$ defined by \eqref{f4.d} is regarded as the chemical potential associated to $(\varphi, \sigma)$. In \eqref{f4.d}, $A, B$ are two positive constants related to the surface tension and the thickness of the diffuse interface, respectively. The bulk free energy $\varPsi$ considered in this paper enjoys a double well structure that leads to the separation phenomenon of the binary mixture. A typical form is given by
\be
\varPsi (r)=\frac{\theta}{2}[(1-r)\ln(1-r)+(1+r)\ln(1+r)]+\frac{\theta_{0}}{2}(1-r^2),\quad \forall\, r\in(-1,1),
\label{pot}
\ee
with $0<\theta<\theta_{0}$ (see e.g., \cite{CH,CMZ}).  Comparing with the classical quartic potential $\varPsi_{\mathrm{reg}}(r)=(r^2-1)^2$, the singular nature of its derivative $\varPsi'$ at the pure phases $\pm 1$ guarantees that the phase variable $\varphi$ lies in the physical range $[-1,1]$ (see \cite{A2007,DD,MZ04}).
The nontrivial coupling between the Cahn--Hilliard equation \eqref{f1.a}--\eqref{f4.d} and the diffusion equation \eqref{f2.b} for the nutrient is characterized  by the constant $\chi$, which models some specific mechanisms such as chemotaxis/active transport in the context of tumor growth modeling (see, e.g., \cite{GLSS, GL17e}).
The Cahn--Hilliard equation \eqref{f1.a} also involves some nonlocal interaction that is given by Oono's type $-\alpha(\varphi-c_0)$ for the sake of simplicity (cf. e.g., \cite{GG,Mi11}), where $\alpha\geq 0$, $c_0\in(-1,1)$. Besides, it contains a viscous term $\epsilon\partial_t\varphi$ with $\epsilon>0$, which represents possible influence of the internal microforces (see, e.g. \cite{G,N}). In this sense, one may call \eqref{f1.a}--\eqref{f4.d} (neglecting the term $\chi\sigma)$ a viscous Cahn--Hilliard--Oono system.

The Cahn--Hilliard type models have been employed as an efficient mathematical tool for the study on dynamics of binary mixtures, in particular, recently for the tumour growth modelling \cite{CLLW,HvO,OHP}.
Concerning the mathematical analysis of the Cahn--Hilliard equation and its variants, we refer to \cite{A2007,CMZ,CGH,CGW14,CRW,DD,Fa15,FGR,GL17,GL17e,GLR,GLS2021,GGL2010,Mi11,MT,MRS,MZ04} and the references cited therein. Our aim in this paper is to perform a first step study on the simplified model \eqref{f1.a}--\eqref{f2.b} that still maintains some interesting features, e.g., the effects of chemotaxis and active transport associated with the nutrient, and certain nonlocal interactions between the two components themselves.

First, under suitable assumptions on the singular potential function $\varPsi$ and coefficients of the system, we show the existence and uniqueness of global weak solutions to problem \eqref{f1.a}--\eqref{ini0} on the whole interval $[0,+\infty)$ (see Theorem \ref{main1}). The proof is based on a suitable Galerkin approximation that mainly follows the argument in \cite{H} for a more general system with fluid interaction. Thanks to the singular potential, we are allowed to remove certain restricted assumptions on the coefficients $A$ and $\chi$ when a regular potential was adopted (cf. \cite{GL17e,LW}). We recall that the existence of global weak solutions to the Cahn--Hilliard--Oono system with a singular potential has been proven in \cite{MT,GG}.
Our result extends the previous works to the case with chemotaxis and active transport, i.e., the coupling with equation \eqref{f2.b} and $\chi\neq 0$. For well-posedness results on more general system with further mechanics like tumour proliferation, apoptosis and the nutrient consumption, subject to different types of boundary conditions, we refer to \cite{GL17,GL17e} etc.

Next, we pay our attention to the regularity of global weak solutions to problem \eqref{f1.a}--\eqref{ini0}. After obtaining the existence and uniqueness of global strong solutions (see Theorem \ref{main}), we can take advantage of the parabolic nature of the evolution system and show the instantaneous regularizing effect of weak solutions for $t>0$ (see Corollary \ref{cr}). In particular, we prove the so-called instantaneous strict separation property, which guarantees that the weak solution stays away from pure phases $\pm 1$ for all $t>0$ and the separation is uniform when $t\geq \eta$, for an arbitrary but fixed $\eta>0$.

The property of separation from pure states plays an important role in the study of the Cahn--Hilliard type equations, since the singular potential can be regarded as a globally Lipschitz function and thus further regularity of solutions can be gained (see e.g., \cite{A2007,CMZ,MZ04,GG}). When the logarithmic potential \eqref{pot} is considered, the strict separation property for the Cahn--Hilliard equation in dimension two has been proven in \cite{MZ04}. Later on, the authors of \cite{GG} extend the result to the Cahn--Hilliard--Oono equation by using an alternative
approach (which can be generalized to more complicated systems with fluid interactions, see e.g., \cite{GGW,GMT,H1}). However, the situation is less satisfactory when the spatial dimension is three, because the singularity of the logarithmic potential seems not strong enough. On one hand, it was shown in \cite{A2007} that a weak solution of the Cahn--Hilliard equation will stay eventually away from the pure states for sufficiently large time. On the other hand, there were two possible ways in the literature to recover the instantaneous strict separation: one is to impose a stronger singularity on the potential \cite[Remark 7.1]{MZ04} (see also \cite{LP} for some recent improvements); the other one is to introduce a viscous term $\epsilon\partial_t\varphi$ in the chemical potential $\mu$, which brings some regularizing effect on the time derivative of $\varphi$ (see \cite[Section 3]{MZ04}). In this paper, we choose to include the viscous term and extend the result in \cite{MZ04} to the coupled system \eqref{f1.a}--\eqref{f2.b} with chemotaxis, active transport and Oono's interaction. The argument in \cite{MZ04} is essentially based on a comparison principle for second-order parabolic equations which is only available for $\epsilon>0$. In our case, additional efforts have to be made to overcome the difficulties brought by the nonlocal Oono's term (which yields the loss of mass conservation) as well as the coupling with the nutrient $\sigma$ when $\chi\neq 0$. 	

Finally, we study the long-time behavior of problem \eqref{f1.a}--\eqref{ini0}. Based on some dissipative estimates and the asymptotic compactness of global weak solutions, we prove the existence of a global attractor for the corresponding dissipative dynamical system generated by global weak solutions of problem \eqref{f1.a}--\eqref{ini0} in a proper phase space (see Theorem \ref{attr}). In this aspect, we mention \cite{FGR,MRS} for the study on global attractors of some related diffusive interface models with regular potentials and neglecting the effects due to chemotaxis as well as active transport. At last, we characterize the $\omega$-limit set of an arbitrary initial datum in the finite energy space (see Theorem \ref{main4}). This also provides a dynamical approach for the investigation of steady states to problem \eqref{f1.a}--\eqref{ini0} even for the non-viscous case $\epsilon=0$ (see Corollary \ref{sta2}). We remark that there are some further issues worth investigating, such as the convergence of any bounded global weak solution to a single steady state as $t\to +\infty$ (cf. \cite{A2007,GG}) and the asymptotic behavior of solutions as $\epsilon\to 0$ (cf. \cite{MZ04}). These will be addressed in a forthcoming paper.

The remaining part of this paper is organized as follows. In Section \ref{pm}, we introduce the functional settings and state the main results. Section \ref{ws1} is devoted to the existence and uniqueness of global weak solutions. In Section \ref{ss}, we prove the existence, uniqueness and the strict separation property of global strong solutions. In Section \ref{ce}, we first show the instantaneous regularity and in particular, the separation property for global weak solutions for $t>0$. Then we prove the existence of a global attractor and characterize the $\omega$-limit set.

\section{Main Results}\label{pm}
\setcounter{equation}{0}
\subsection{Preliminaries}
We assume that $\Omega \subset\mathbb{R}^3$ is a bounded domain with smooth boundary $\partial\Omega$. For the standard Lebesgue and Sobolev spaces, we use the notations $L^{p} := L^{p}(\Omega)$ and $W^{k,p} := W^{k,p}(\Omega)$ for any $p \in [1,+\infty]$, $k \in \mathbb{N}\setminus \{0\} $ equipped with the norms $\|\cdot\|_{L^{p}}$ and $\|\cdot\|_{W^{k,p}}$.  In the case $p = 2$, we denote $H^{k} := W^{k,2}$ with the norm $\|\cdot\|_{H^{k}}$. The norm and inner product on $L^{2}(\Omega)$ are simply denoted by $\|\cdot\|$ and $(\cdot,\cdot)$, respectively.
The dual space of a Banach space $X$ is denoted by $X'$, and the duality pairing between $X$ and its dual will be denoted by
$\langle \cdot,\cdot\rangle_{X',X}$. Given an interval $J\subset \mathbb{R}^+$, we introduce
the function space $L^p(J;X)$ with $p\in [1,+\infty]$, which
consists of Bochner measurable $p$-integrable
functions with values in the Banach space $X$.

For every $f\in H^1(\Omega)'$, we denote by $\overline{f}$ its generalized mean value over $\Omega$ such that
$\overline{f}=|\Omega|^{-1}\langle f,1\rangle_{(H^1)',H^1}$. If $f\in L^1(\Omega)$, then its mean is simply given by $\overline{f} =|\Omega|^{-1}\int_\Omega f \,dx$. Besides, in view of the homogeneous Neumann boundary condition \eqref{boundary}, we also set
$H^2_{N}(\Omega):=\{f\in H^2(\Omega):\,\partial_{\bm{n}}f=0 \ \textrm{on}\  \partial \Omega\}$ and recall the following  Poincar\'{e}--Wirtinger inequality:
\begin{equation}
\label{poincare}
\|f-\overline{f}\|\leq C_P\|\nabla f\|,\quad \forall\,
f\in H^1(\Omega),
\end{equation}
where $C_P$ is a constant depending only on the dimensions and $\Omega$.
Consider the realization of the operator $-\Delta$  with homogeneous Neumann boundary condition denoted by $\mathcal{A}_N$ such that $\mathcal{A}_N\in \mathcal{L}(H^1(\Omega),H^1(\Omega)')$ is defined by
\begin{equation}\nonumber
   \langle \mathcal{A}_N u,v\rangle_{(H^1)',H^1} := \int_\Omega \nabla u\cdot \nabla v \, dx,\quad \text{for }\,u,v\in H^1(\Omega).
\end{equation}
Then for the linear subspaces
$$
V_0=\{ u \in H^1(\Omega):\ \overline{u}=0\}, \quad
V_0'= \{ u \in H^1(\Omega)':\ \overline{u}=0 \},
$$
the restriction of $\mathcal{A}_N$ from $V_0$ onto $V_0'$
is an isomorphism. In particular, $\mathcal{A}_N$ is positively defined on $V_0$ and self-adjoint. We denote its inverse map by $\mathcal{N} =\mathcal{A}_N^{-1}: V_0'
\to V_0$. Note that for every $f\in V_0'$, $u= \mathcal{N} f \in V_0$ is the unique weak solution of the Neumann problem
$$
-\Delta u=f \quad \text{in} \ \Omega,\quad \text{with}\quad
\partial_{\bm{n}} u=0 \quad \text{on}\ \partial \Omega.
$$
Besides, we have (cf. \cite{MZ04})
\begin{align}
&\langle \mathcal{A}_N u, \mathcal{N} g\rangle_{V_0',V_0} =\langle  g,u\rangle_{(H^1)',H^1}, \quad \forall\, u\in H^1(\Omega), \ \forall\, g\in V_0',\label{propN1}\\
&\langle  g, \mathcal{N} f\rangle_{V_0',V_0}
=\langle f, \mathcal{N} g\rangle_{V_0',V_0} = \int_{\Omega} \nabla(\mathcal{N} g)
\cdot \nabla (\mathcal{N} f) \, dx, \quad \forall \, g,f \in V_0',\label{propN2}
\end{align}
and the chain rule
\begin{align}
&\langle \partial_t u, \mathcal{N} u(t)\rangle_{V_0',V_0}=\frac{1}{2}\frac{d}{dt}\|\nabla \mathcal{N} u\|^2,\ \ \textrm{a.e. in}\ (0,T),\nonumber
\end{align}
for any $u\in H^1(0,T; V_0')$. For any $f\in V_0'$, we set $\|f\|_{V_0'}=\|\nabla \mathcal{N} f\|$.
It is well-known that $f \to \|f\|_{V_0'}$ and $
f \to(\|f-\overline{f}\|_{V_0'}^2 +|\overline{f}|^2)^\frac12$ are
equivalent norms on $V_0'$ and $H^1(\Omega)'$,
respectively. Besides, thanks to \eqref{poincare}, we
see that $f\to \|\nabla f\|$,  $f\to (\|\nabla f\|^2+|\overline{f}|^2)^\frac12$ are equivalent norms on $V_0$ and $H^1(\Omega)$, and
$
\|f\| \leq \|f\|_{V_0'}^{\frac12} \| \nabla f\|^{\frac12}$ for any $f \in V_0$.
We also consider the operator $\mathcal{A}_1 := I-\Delta $ with homogeneous Neumann boundary condition that is an unbounded operator $L^2(\Omega)$ with domain $D(\mathcal{A}_1) =H^2_N(\Omega)$. It is well-known that
$\mathcal{A}_1$ is a positive, unbounded, self-adjoint operator in $L^2(\Omega)$ with a compact inverse (denoted by $\mathcal{N}_1:=\mathcal{A}^{-1}_1$), see, e.g., \cite[Chapter II, Section 2.2]{T}. Then $f \to \|\mathcal{N}_1^\frac12 f\|$ is also an equivalent norm on $H^1(\Omega)'$.

In the sequel, the symbol $C$ denotes a generic positive constant that may depend on norms of the initial data, the domain $\Omega$ as well as parameters of the system. We denote by $C_T$ if a positive constant depends on the final time $T$. Specific dependence will be pointed out if necessary.

\subsection{Main results}
We now introduce the following hypotheses.
\begin{enumerate}
	\item[(H1)]\label{item:as} The singular potential $\varPsi$ belongs to the class of functions $C[-1,1]\cap C^{2}(-1,1)$ and can be written into the following form
	\begin{equation} \varPsi(r)=\varPsi_{0}(r)-\frac{\theta_{0}}{2}r^2,\nonumber
	\end{equation}
	such that
	\begin{equation}
	\lim_{r\to \pm 1} \varPsi_{0}'(r)=\pm \infty ,\quad \text{and}\ \  \varPsi_{0}''(r)\ge \theta,\quad \forall\, r\in (-1,1),\nonumber
	\end{equation}
 with some strictly positive constants $\theta_{0},\  \theta$ satisfying
\be
\theta_{0}-\theta:=K>0.
\ee
We make the extension $\varPsi_{0}(r)=+\infty$ for any $r\notin[-1,1]$.
 In addition, there exists a small constant $a_0\in(0,1)$ such that $\varPsi_{0}''$ is non-decreasing in $[1-a_0,1)$ and non-increasing in $(-1,-1+a_0]$.
	
	\item[(H2)]\label{sco} The coefficients $A,\ B,\ \epsilon,\ \chi,\ \alpha,\ c_0,$ are  prescribed  constants and satisfy
	\be
	A>0,\ \ B>0,\ \epsilon>0,\  \alpha> 0,\ \chi \in \mathbb{R},\ c_0\in(-1,1).
\nonumber
	\ee
\end{enumerate}
\begin{remark}
	It is easy to verify that the logarithmic potential \eqref{pot} fulfills the assumption $\mathrm{(H1)}$. This is indeed the case we are interested in.
\end{remark}

Next, we introduce the notions of weak and strong  solutions to the initial and boundary value problem \eqref{f1.a}--\eqref{ini0}.
\bd[Weak solutions] \label{maind}
A triple $(\varphi,\mu,\sigma)$ is a global weak solution to problem \eqref{f1.a}--\eqref{ini0}, if it fulfills the following regularity properties
\begin{align}
&\varphi \in C([0,+\infty);H^1(\Omega))\cap L^{2}_{loc}(0,+\infty;H^2_{N}(\Omega)),\notag \\
&\partial_t\varphi \in L^2(0,+\infty;L^2(\Omega)),\notag \\
&\mu \in   L^{2}_{loc}(0,+\infty;H^1(\Omega)),\notag \\
&\sigma  \in C([0,+\infty);L^2(\Omega))\cap L^{2}_{loc}(0,+\infty;H^1(\Omega)),\notag \\
&\partial_t\sigma\in  L^2(0,+\infty;H^1(\Omega)'),\notag\\
&\text{with}\ \varphi\in L^{\infty}(\Omega\times (0,+\infty))\ \textrm{and}\ \ |\varphi(x,t)|<1\ \ \textrm{a.e.\ in}\ \Omega\times(0,+\infty),\notag
\end{align}
and satisfies
\begin{subequations}
			\begin{alignat}{3}
			&\left \langle \partial_t \varphi,\xi\right \rangle_{(H^1)',H^1}=- (\nabla \mu,\nabla \xi)-\alpha(\varphi-c_0,\xi),\, \quad\quad \textrm{a.e.\ in}\ (0,+\infty), \label{test1.a} \\
			&\ \, \mu=A\varPsi'(\varphi)-B\Delta \varphi-\chi \sigma+\epsilon\partial_t\varphi,\quad\  \quad\quad\quad \textrm{a.e.\ in}\ \Omega\times(0,+\infty),\label{test4.d}\\
			&\left \langle\partial_t \sigma,\xi\right \rangle_{(H^1)',H^1}+ (\nabla \sigma,\nabla \xi)= \chi ( \nabla \varphi,\nabla \xi),\  \ \quad\quad\quad \textrm{a.e.\ in}\ (0,+\infty),\label{test2.b}
			\end{alignat}
\end{subequations}
for all $\xi \in H^1(\Omega)$. Moreover, the initial conditions are fulfilled $\varphi|_{t=0}=\varphi_{0}$, $\sigma|_{t=0}=\sigma_{0}$.
\ed
\bd[Strong solutions] \label{maind1}
 A triple $(\varphi,\mu,\sigma)$ is a global strong solution to problem \eqref{f1.a}--\eqref{ini0}, if it fulfills the following regularity properties
\be
\begin{aligned}
    &\varphi \in  C([0,+\infty);H^2_N(\Omega))\cap L^2_{loc}(0,+\infty; H^3(\Omega)), \\
	&\partial_t\varphi\in L^{\infty}(0,+\infty;L^2(\Omega))\cap L^{2}(0,+\infty;H^1(\Omega)), \\
	&\mu \in   L^{\infty}(0,+\infty;L^2(\Omega))\cap  L^{2}_{loc}(0,+\infty;H^1(\Omega)), \\
	&\sigma  \in C([0,+\infty);H^2_N(\Omega))\cap L^{2}_{loc}(0,+\infty;H^3(\Omega)),\\
&\partial_t\sigma  \in  L^{\infty}(0,+\infty;L^2(\Omega))\cap L^{2}(0,+\infty;H^1(\Omega)),
\label{sp}
\end{aligned}
\ee
and satisfies the system \eqref{f1.a}--\eqref{ini0} $\textrm{a.e.\ in}\ \Omega\times(0,+\infty)$ as well as the initial conditions.
\ed

Now we state the main results of this paper.
\bt[Global weak solutions]
\label{main1}
Suppose that the hypotheses (H1)--(H2) are satisfied, then for any initial data satisfying $\varphi_{0}\in H^1(\Omega)$, $\sigma_{0}\in L^2(\Omega)$ with $\|  \varphi_{0} \|_{L^{\infty}} \le 1$ and
$|\overline{\varphi_0}|<1$, problem \eqref{f1.a}--\eqref{ini0} admits a unique global weak solution $(\varphi,\mu,\sigma)$ in the sense of Definition \ref{maind}. Moreover, consider two groups of initial data satisfying  $(\varphi_{0i},\sigma_{0i})\in H^1(\Omega)\times L^2(\Omega)$ with $\left \|  \varphi_{0i}\right \|_{L^{\infty}} \le 1$, $|\overline{\varphi_{0i}}|<1$, $i=1,\, 2$ and $T>0$.
The global weak solutions $(\varphi_{1},\sigma_{1})$,   $(\varphi_{2 },\sigma_{2})$ to problem \eqref{f1.a}--\eqref{ini0} on $[0,T]$ with initial data $(\varphi_{0i},\sigma_{0i})$, $i=1,\, 2$,
satisfy the following continuous dependence estimate:
\be
\begin{aligned}
&\|\varphi_1(t)-\varphi_2(t)\|_{(H^1)'}^2 + \epsilon\|\varphi_1(t)-\varphi_2(t)\|^2 + \|\sigma_1(t)-\sigma_2(t)\|_{(H^1)'}^2\\
&\qquad + \int_0^t\|\varphi_1(\tau)-\varphi_2(\tau)\|_{H^1}^2 d\tau  +
\int_0^t\|\sigma_1(\tau)-\sigma_2(\tau)\|^2 d\tau \\
&\quad \le C_T\big(\|\varphi_{01}-\varphi_{02}\|_{(H^1)'}^2 + \epsilon\|\varphi_{01}-\varphi_{02}\|^2 +\|\sigma_{01}-\sigma_{02}\|_{(H^1)'}^2 +|\overline{\varphi_{01}}-\overline{\varphi_{02}}| \big),
\label{conti2}
\end{aligned}
\ee
for all $t\in [0,T]$, where the constant ${C}_T>0$ may depend on norms of the initial data, $\Omega$, $T$ and coefficients of the system.
\et
\bt[Global strong solutions]
\label{main} Suppose that the hypotheses (H1)--(H2) are satisfied, then for any initial data satisfying $\varphi_{0} \in H^{2}_N(\Omega)$, $\sigma_{0}\in H^2_N(\Omega)$ with $|\overline{\varphi_0}|<1$ and
\be
\|\varphi_0\|_{L^{\infty}}\le 1-\delta_0,
\label{sep3}
\ee
for some $\delta_0\in(0,1)$, problem \eqref{f1.a}--\eqref{ini0} admits a unique global strong solution in the sense of Definition \ref{maind1}.
Furthermore, there exists a constant $\delta\in(0, \delta_0]$ such that
\be
\|\varphi(t)\|_{L^{\infty}} \leq 1-\delta,\quad \forall\, t \in[0,+\infty),\label{sep1}
\ee
where $\delta$ depends on $\delta_0$, norms of the initial data, $\Omega$, and coefficients of the system.
\et
By virtue of the well-posedness of global
strong solutions, we are able to prove that the weak solution regularizes instantaneously for $t>0$.
\begin{corollary}
[Regularity of weak solutions] \label{cr}
Suppose that the assumptions of Theorem \ref{main1} are satisfied. For any $\tau>0$, the global weak solution $(\varphi, \sigma)$ obtained in Theorem \ref{main1} becomes a strong one on $[\tau,+\infty)$ and there exists a constant $\delta_\tau\in (0,1)$ such that
\be
\|\varphi(t)\|_{L^{\infty}}
\leq 1-\delta_\tau,\quad \forall\, t \in[\tau,+\infty),
\label{sep7}
\ee
where $\delta_\tau$ depends on $\tau$, $\|\varphi_0\|_{H^1}$, $\|\sigma_0\|$, $1-|\overline{\varphi_0}|$, $\Omega$ and coefficients of the system.
\end{corollary}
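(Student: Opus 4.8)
The plan is to combine the parabolic smoothing of the system with the well-posedness and strict separation of strong solutions from Theorem \ref{main}, identifying the two solutions through the uniqueness assertion of Theorem \ref{main1}. The goal is to produce a single time $t_0\in(0,\tau)$ at which the weak solution $(\varphi,\sigma)$ qualifies as an admissible strong initial datum, namely $\varphi(t_0),\sigma(t_0)\in H^2_N(\Omega)$, $|\overline{\varphi(t_0)}|<1$, and $\|\varphi(t_0)\|_{L^\infty}\le 1-\delta_0$ for some $\delta_0\in(0,1)$. Once this is achieved, Theorem \ref{main} started from $t_0$ (the system being autonomous) produces a global strong solution on $[t_0,+\infty)$, which by uniqueness coincides with the given weak solution; since $t_0<\tau$, the weak solution is strong on $[\tau,+\infty)$ and inherits \eqref{sep1}, giving \eqref{sep7}.

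First I would upgrade the regularity of the weak solution on any interval $[\eta,+\infty)$ with $\eta>0$. Differentiating \eqref{f1.a}--\eqref{f4.d} in time and testing with the time weight $t\mapsto (t-\eta/2)_+$ to absorb the merely $H^1$-regular datum, the viscous term $\epsilon\partial_t\varphi$ together with the dissipation $\|\nabla\mu\|^2$ yields $\partial_t\varphi\in L^\infty(\eta,\infty;L^2)\cap L^2(\eta,\infty;H^1)$. Inserting this into $\partial_t\varphi=\Delta\mu-\alpha(\varphi-c_0)$ gives $\Delta\mu\in L^\infty(\eta,\infty;L^2)$, hence $\mu\in L^\infty(\eta,\infty;H^2_N)\hookrightarrow L^\infty$ by elliptic regularity, and then the standard singular-potential estimate for $-B\Delta\varphi+A\varPsi'(\varphi)=\mu+\chi\sigma-\epsilon\partial_t\varphi$ promotes $\varphi\in L^\infty(\eta,\infty;H^2_N)$ and $\varPsi'(\varphi)\in L^\infty(\eta,\infty;L^2)$. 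For the nutrient, \eqref{f2.b} is a heat equation with source $-\chi\Delta\varphi\in L^2_{loc}(0,\infty;L^2)$, so maximal parabolic regularity gives $\sigma\in L^2_{loc}(0,\infty;H^2_N)$, and a further weighted estimate promotes $\sigma\in L^\infty(\eta,\infty;H^2_N)\hookrightarrow L^\infty$. In particular $\varphi(t_0),\sigma(t_0)\in H^2_N(\Omega)$ for a.e.\ $t_0>0$, while integrating \eqref{f1.a} over $\Omega$ and using the Neumann condition gives $\overline{\varphi}(t)=c_0+(\overline{\varphi_0}-c_0)e^{-\alpha t}$, which stays strictly inside $(-1,1)$ with a quantitative distance to $\pm1$ controlled by $1-|\overline{\varphi_0}|$; this is where the loss of mass conservation due to Oono's term is kept under control.

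The main obstacle is the remaining requirement $\|\varphi(t_0)\|_{L^\infty}\le 1-\delta_0$: the weak solution only satisfies $|\varphi|<1$ a.e., and Theorem \ref{main} cannot be invoked until strict separation is available at some positive time. To obtain it I would run, on $[\eta,+\infty)$, the comparison-principle argument underlying \eqref{sep1}, which is the sole place where the viscosity $\epsilon>0$ is essential. Rewriting \eqref{f4.d} through $\varPsi'=\varPsi_0'-\theta_0\,\mathrm{id}$ as the semilinear parabolic problem
\be
\epsilon\partial_t\varphi-B\Delta\varphi+A\varPsi_0'(\varphi)=g:=\mu+A\theta_0\varphi+\chi\sigma,\qquad \partial_{\bm n}\varphi=0,\nonumber
\ee
the smoothing step guarantees $g\in L^\infty\big((\eta,+\infty)\times\Omega\big)$ (indeed $\mu,\sigma\in L^\infty$ and $\varphi\in L^\infty$). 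Comparing $\varphi$ with the spatially homogeneous sub/supersolutions governed by the singular reaction $A\varPsi_0'$, whose divergence at $\pm1$ dominates the bounded forcing $g$, forces $\varphi$ to stay uniformly away from $\pm1$ on $[\eta,+\infty)$, with a separation constant depending only on $\|g\|_{L^\infty}$, the gap $1-\sup_t|\overline{\varphi}(t)|$, and the structure of $\varPsi_0$ in $(\mathrm{H1})$. Tracking these dependencies back through the dissipative estimates, which for $t\ge\tau$ are bounded in terms of $\tau$, $\|\varphi_0\|_{H^1}$, $\|\sigma_0\|$, $1-|\overline{\varphi_0}|$, $\Omega$ and the coefficients, yields the stated dependence of $\delta_\tau$.

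Finally, fixing any $t_0\in(0,\tau)$ at which both the $H^2_N$ regularity and the separation hold, I apply Theorem \ref{main} with initial time $t_0$ and datum $(\varphi(t_0),\sigma(t_0))$ to obtain a global strong solution on $[t_0,+\infty)$, identify it with $(\varphi,\sigma)$ by the uniqueness in Theorem \ref{main1}, and read off \eqref{sep7} from \eqref{sep1}. The two technical points requiring care are the rigorous justification of the time-differentiated estimates at the weak-solution level, which I would carry out on the Galerkin scheme of Theorem \ref{main1} and then pass to the limit, and the presence of the coupling term $\chi\sigma$ in $g$, which is precisely why the $L^\infty$-in-time control of $\sigma$ must be secured in the smoothing step before the comparison argument can be closed.
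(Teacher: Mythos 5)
Your overall architecture matches the paper's in spirit — parabolic smoothing for $t>0$, the viscous comparison principle of \cite[Section 3]{MZ04} as the engine for separation, and identification with the weak solution through uniqueness — but as stated there is a genuine gap at the comparison step, and it sits exactly at the crux of the corollary. At the starting time $\eta$ the weak solution satisfies only $|\varphi(\eta,x)|<1$ a.e., so $\|\varphi(\eta)\|_{L^\infty}$ may equal $1$. A spatially homogeneous supersolution dominating $\varphi(\eta)$ would have to start at $y_+(\eta)=1$, where $\varPsi_0'$ is singular, so the ODE $\epsilon y_+'+A\varPsi_0'(y_+)=C_h$ cannot be initialized there, and no admissible comparison function exists at time $\eta$. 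Consequently your claim that $\varphi$ stays ``uniformly away from $\pm1$ on $[\eta,+\infty)$'' with a constant depending only on $\|g\|_{L^\infty}$, the mean gap and $\varPsi_0$ cannot hold at $t=\eta$ itself: separation is necessarily \emph{delayed}, with a constant depending on the delay. The paper resolves precisely this point by approximation: it picks a good time $\tau_0$ where \eqref{varphit4} holds, replaces the datum by $\varphi_0^{(n)}=(1-\frac{1}{2n})\varphi(\tau_0)$ (and mollifies $\sigma(\tau_0)$ via $(1+\frac{1}{2n}\mathcal{A}_N)^{-1}$), runs Theorem \ref{main} on each approximation, and invokes \cite[Corollary A.1]{MZ04} for the ODE system \eqref{ode2}: although the initial separation $\frac{1}{2n}$ degenerates, the solutions $y_\pm^{(n)}$ are separated by a constant $\tilde{\delta}$ \emph{independent of $n$} after any positive delay $\tau_2>\tau_1$, giving \eqref{sp5a} uniformly in $n$, which survives the limit $n\to\infty$. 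Your plan can be repaired along the same lines (compare with $y_\pm^{(n)}$, observe $(\varphi(\eta)-y_+^{(n)}(\eta))^+$ is of size $\frac{1}{2n}$, and pass to the limit using the $n$-uniform delayed separation); since your final assembly only needs separation at a single $t_0<\tau$ before restarting Theorem \ref{main}, the architecture does survive, but this $n$-uniform delayed ODE lemma is a missing ingredient, not a routine detail.

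Beyond this, your route genuinely differs from the paper's in how the $H^2$-regularity and $L^\infty$-bounds for $\mu$, $\sigma$, $\partial_t\varphi$ needed before the comparison are produced. You propose time-weighted, time-differentiated estimates performed directly on the weak solution and justified on the Galerkin scheme; note that the paper's existence proof uses a two-parameter approximation (Galerkin plus the regularized potential $\varPsi_\kappa$), so your weighted estimates would have to be closed uniformly in $\kappa$, including a $\kappa$-uniform analogue of the bound $|\overline{\varPsi'(\varphi)}|\le C\|\varPsi'(\varphi)-\overline{\varPsi'(\varphi)}\|_{L^1}+C$ resting on \eqref{distm} — feasible but a real technical burden. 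The paper sidesteps weighted estimates entirely: it selects $\tau_0\in(\frac{\tau}{6},\frac{\tau}{3})$ from the $L^2$-in-time weak-solution bounds, approximates the datum as above, reuses Theorem \ref{main} as a black box on $[\tau_0,+\infty)$, and gains the extra $\sigma$-regularity at a second intermediate time $\tau_1$ by difference quotients. What your approach would buy is a single comparison argument and no limit passage in $n$ for the full system; what the paper's buys is that every delicate estimate is performed on bona fide strong solutions, where the formal computations of Section \ref{ss} are already licensed. Your tracking of the dependence of $\delta_\tau$, and your control of the mean via \eqref{aver} to handle the loss of mass conservation, are consistent with the paper's proof.
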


Next, we state the results on the long-time behavior.
For any given $m_1\in [0,1)$, $c_0\in [-m_1,m_1]$ and $m_2\geq 0$, we introduce the following phase space
$$
\mathcal{X}_{m_1,m_2}=\{(\varphi,\sigma)\in H^1(\Omega)\times L^2(\Omega)\,:\, \|\varphi\|_{L^\infty}\leq 1,\ |\overline{\varphi}|\leq m_1,\ |\overline{\sigma}|\leq m_2\}
$$
endowed with the metric
$
\mathrm{d}((\varphi_1,\sigma_1),(\varphi_2,\sigma_2))=
\|\varphi_1-\varphi_2\|_{H^1}+\|\sigma_1-\sigma_2\|
$.
It is straightforward to check that $\mathcal{X}_{m_1,m_2}$ is a complete metric space. Then we have
\begin{theorem}[Global attractor]\label{attr}
Suppose that the hypotheses (H1)--(H2) are satisfied. Problem \eqref{f1.a}--\eqref{ini0} generates a dynamical system $\mathcal{S}(t): \mathcal{X}_{m_1,m_2}\to \mathcal{X}_{m_1,m_2}$ such that $\mathcal{S}(t)(\varphi_0,\sigma_0) =(\varphi(t),\sigma(t))$ for all $t\geq 0$, where $(\varphi, \sigma)$ is the global weak solution in the sense of Definition \ref{maind} corresponding
to the initial datum $(\varphi_0, \sigma_0)\in \mathcal{X}_{m_1,m_2}$. Moreover, the dynamic system $(\mathcal{S}(t), \mathcal{X}_{m_1,m_2})$ possesses a compact global attractor $\mathcal{A}_{m_1,m_2}\subset \mathcal{X}_{m_1,m_2}$, which is bounded in $H^2(\Omega)\times H^2(\Omega)$.
\end{theorem}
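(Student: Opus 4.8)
The plan is to verify the three classical ingredients guaranteeing the existence of a global attractor (see, e.g., \cite{T}): that $\mathcal{S}(t)$ is a well-defined continuous semigroup on the complete metric space $\mathcal{X}_{m_1,m_2}$, that it possesses a bounded absorbing set, and that it is asymptotically compact. The stated $H^2\times H^2$ regularity of $\mathcal{A}_{m_1,m_2}$ will follow automatically once the bounded absorbing set is upgraded to a compact one.

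First I would check that $\mathcal{S}(t)$ is well defined and leaves $\mathcal{X}_{m_1,m_2}$ invariant. Integrating \eqref{f1.a} over $\Omega$ and using the Neumann conditions gives $\frac{d}{dt}\overline{\varphi}=-\alpha(\overline{\varphi}-c_0)$, so that $\overline{\varphi}(t)=c_0+(\overline{\varphi_0}-c_0)e^{-\alpha t}$ is a convex combination of $c_0$ and $\overline{\varphi_0}$; since $|c_0|\le m_1$ and $|\overline{\varphi_0}|\le m_1$, this yields $|\overline{\varphi}(t)|\le m_1$ for all $t\ge 0$. Likewise, integrating \eqref{f2.b} gives $\overline{\sigma}(t)=\overline{\sigma_0}$, hence $|\overline{\sigma}(t)|\le m_2$, while $\|\varphi(t)\|_{L^\infty}\le 1$ is built into Definition \ref{maind}. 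The semigroup property $\mathcal{S}(t+s)=\mathcal{S}(t)\mathcal{S}(s)$ follows from the uniqueness in Theorem \ref{main1}. Continuity of $\mathcal{S}(t)$ in the metric $\mathrm{d}$ is the only delicate point here, since \eqref{conti2} only controls the difference of two solutions in the weaker norms $(H^1)'$ and $L^2$; I would combine it with the uniform higher-order bounds produced below and the interpolation inequality $\|u\|_{H^1}\le C\|u\|_{(H^1)'}^{1/3}\|u\|_{H^2}^{2/3}$ (and its analogue $\|u\|\le C\|u\|_{(H^1)'}^{2/3}\|u\|_{H^2}^{1/3}$ for $\sigma$) to upgrade \eqref{conti2} to continuity in $H^1\times L^2$ on bounded subsets.

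The heart of the proof is the dissipative estimate. The natural Lyapunov functional is the free energy
\[
E(\varphi,\sigma)=\int_\Omega\Big(A\varPsi(\varphi)+\tfrac{B}{2}|\nabla\varphi|^2+\tfrac12\sigma^2+\chi\sigma(1-\varphi)\Big)\,dx,
\]
whose variational derivatives reproduce $\mu-\epsilon\partial_t\varphi$ and $\sigma+\chi(1-\varphi)$. Testing \eqref{test1.a} with $\mu$, \eqref{test2.b} with $\sigma+\chi(1-\varphi)$, and using \eqref{test4.d} against $\partial_t\varphi$, a computation that is rigorous at the Galerkin level gives
\[
\frac{d}{dt}E=-\|\nabla\mu\|^2-\epsilon\|\partial_t\varphi\|^2-\|\nabla\sigma-\chi\nabla\varphi\|^2-\alpha\int_\Omega\mu(\varphi-c_0)\,dx .
\]
The main obstacle is precisely the last, non-dissipative term, which is absent in the pure Cahn--Hilliard case and reflects the loss of mass conservation caused by Oono's interaction. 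I would split it as $\alpha\int_\Omega(\mu-\overline{\mu})(\varphi-\overline{\varphi})\,dx+\alpha|\Omega|\,\overline{\mu}\,(\overline{\varphi}-c_0)$, absorb the first piece into $\|\nabla\mu\|^2$ via Poincar\'e--Wirtinger, and control $\overline{\mu}=A\,\overline{\varPsi'(\varphi)}-\chi\overline{\sigma}+\epsilon\,\overline{\partial_t\varphi}$ using the crucial fact that $|\overline{\varphi}|\le m_1<1$ uniformly: a standard lemma then bounds $\|\varPsi'(\varphi)\|_{L^1}$, hence $|\overline{\varPsi'(\varphi)}|$, by $\int_\Omega\varPsi'(\varphi)(\varphi-\overline{\varphi})\,dx$ plus lower-order terms. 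Combining this with the coercivity of $E$ (again exploiting the separation of $\overline{\varphi}$ from $\pm1$ and Young's inequality on the coupling term $\chi\sigma(1-\varphi)$), together with auxiliary estimates obtained by testing \eqref{test4.d} with $\varphi-\overline{\varphi}$, produces a differential inequality of the form $\frac{d}{dt}\mathcal{E}+\kappa\mathcal{E}\le C$ for an energy $\mathcal{E}$ equivalent to the $H^1\times L^2$ norm, hence a bounded absorbing set $\mathcal{B}_0$ in $\mathcal{X}_{m_1,m_2}$ and time-integrable bounds on the dissipation $\nabla\mu,\ \partial_t\varphi,\ \nabla\sigma-\chi\nabla\varphi$.

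Finally, to obtain asymptotic compactness I would promote $\mathcal{B}_0$ to a compact absorbing set. By the instantaneous regularization of Corollary \ref{cr}, trajectories starting in $\mathcal{B}_0$ become strong solutions for $t\ge\tau$ and, thanks to the uniform strict separation \eqref{sep1}, $\varPsi'(\varphi)$ and $\varPsi''(\varphi)$ are bounded there, so the system behaves like a semilinear parabolic problem with smooth nonlinearity. Differentiating the equations in time, testing with $\partial_t\varphi$ and $\partial_t\sigma$, and applying the uniform Gronwall lemma to convert the $L^2_{loc}$-in-time dissipation bounds into time-uniform ones, I would derive uniform estimates for $\partial_t\varphi$ and $\partial_t\sigma$ in $L^2$; elliptic regularity applied to \eqref{f4.d} and \eqref{f2.b} (with the bounded nonlinearity) then yields uniform bounds of $(\varphi,\sigma)$ in $H^2\times H^2$, i.e. a set $\mathcal{B}_1$ bounded in $H^2(\Omega)\times H^2(\Omega)$ that absorbs $\mathcal{B}_0$. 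Since the embeddings $H^2\hookrightarrow H^1$ and $H^2\hookrightarrow L^2$ are compact, $\mathcal{B}_1$ is precompact in $\mathcal{X}_{m_1,m_2}$, which gives both the asymptotic compactness and the claimed regularity. The abstract existence theorem then furnishes the compact global attractor $\mathcal{A}_{m_1,m_2}=\omega(\mathcal{B}_1)\subset\mathcal{X}_{m_1,m_2}$, bounded in $H^2(\Omega)\times H^2(\Omega)$. The step I expect to be most demanding is the dissipative energy estimate, where the Oono term together with the chemotactic coupling $\chi$ must be tamed using the uniform separation of the mean value $\overline{\varphi}$ from $\pm1$.
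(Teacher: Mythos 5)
Your proposal is correct and shares the paper's overall skeleton --- invariance of $\mathcal{X}_{m_1,m_2}$ via \eqref{aver}--\eqref{1aver}, continuity of $\mathcal{S}(t)$ by interpolating the continuous dependence estimate \eqref{conti2} against the instantaneous $H^2\times H^2$ bounds from Corollary \ref{cr}, asymptotic compactness from the same regularization, and the abstract theory of \cite{T} --- but it takes a genuinely different route at the heart of the matter, the dissipative estimate. The paper does not treat the Oono term as a perturbation: in \eqref{mueeb} it exploits the sign and structure of $\alpha\int_\Omega(\varphi-c_0)\mu\,dx$ together with the convexity-type inequality \eqref{ine} to bound this term \emph{from below} by $c_*\mathcal{E}-\tfrac12\mathcal{D}-C_2$ with $c_*=\min\{\alpha,1/(4C_P^2)\}$, so the Oono interaction itself supplies the coercive term $c_*\mathcal{E}$ in \eqref{2menergy} and Gronwall gives \eqref{diss1} in one stroke (note this uses $\alpha>0$ essentially). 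You instead split the Oono term into fluctuation and mean parts, absorb the fluctuation into $\|\nabla\mu\|^2$, control $\overline{\mu}$ through the Miranville--Zelik $L^1$-bound on $\varPsi'(\varphi)$ (legitimate here precisely because $|\overline{\varphi}(t)|\le m_1<1$ on the phase space), and recover coercivity in $\|\nabla\varphi\|^2$ from the auxiliary identity obtained by testing \eqref{test4.d} with $\varphi-\overline{\varphi}$; this works, and has the advantage of being robust in the limit $\alpha=0$ (mass-conserving case), at the price of an extra auxiliary estimate that the paper's argument avoids. Two smaller remarks: your re-derivation of time-uniform $H^2$ bounds on the absorbing set (time-differentiation plus uniform Gronwall) is sound but redundant, since Corollary \ref{cr} --- resting on the uniform-in-time strong estimates of Theorem \ref{main}, whose constants depend on $1-|\overline{\varphi_0}|\ge 1-m_1>0$ uniformly over $\mathcal{X}_{m_1,m_2}$ --- already delivers $\|\varphi(t)\|_{H^2}+\|\sigma(t)\|_{H^2}\le C_\tau$ for $t\ge\tau$, which is all the paper invokes; and your interpolation exponents $\|u\|_{H^1}\le C\|u\|_{(H^1)'}^{1/3}\|u\|_{H^2}^{2/3}$, $\|u\|\le C\|u\|_{(H^1)'}^{2/3}\|u\|_{H^2}^{1/3}$ are the right ones, matching the paper's appeal to the standard interpolation argument of \cite[Proposition 6.1]{GG} (the paper additionally frames $\mathcal{S}(t)$ first as a closed semigroup in the sense of \cite{PZ07} before upgrading to strong continuity, a presentational difference only).
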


 The last result concerns the sequential convergence of global weak solutions as time goes to infinity. More precisely,
\bt[Convergence to equilibrium] \label{main4}
Suppose that the hypotheses (H1)--(H2) are satisfied. Let $(\varphi, \sigma)$ be a global weak solution to problem \eqref{f1.a}--\eqref{ini0} with initial datum $(\varphi_0, \sigma_0)$ as given in Theorem \ref{main1}. Then there exists an equilibrium $(\varphi_{\infty},\sigma_{\infty}) \in H_{N}^{2}(\Omega)\times H_{N}^{2}(\Omega)$, that is a strong solution to the following stationary problem
\begin{subequations}
	\begin{alignat}{3}
&-B\Delta \varphi_{\infty} +A\varPsi^{\prime}(\varphi_{\infty}) -\chi\sigma_{\infty} +\alpha \mathcal{N}(\varphi_{\infty}-c_0) =A\overline{\varPsi^{\prime}(\varphi_{\infty})}
-\chi\overline{\sigma_{\infty}},\quad
&\ \text{a.e. in } \Omega,   \label{5bchv}\\
& \Delta (\sigma_{\infty}-\chi\varphi_{\infty})=0,
&\ \text{a.e. in } \Omega,   \label{5f2.b}  \\
&\partial_{\bm{n}} \varphi_{\infty} = \partial_{\bm{n}} \sigma_{\infty}=0,
&\ \text{on } \partial \Omega, \label{5cchv}\\
&\text{with}\quad \overline{\varphi_{\infty}}=c_0, \quad   \overline{\sigma_{\infty}}=\overline{\sigma_{0}},
& \label{5dchv}
	\end{alignat}
\end{subequations}
and an increasing unbounded sequence $\{t_n\}\nearrow +\infty$ such that
$$
(\varphi(t_n), \sigma(t_n)) \rightarrow (\varphi_{\infty},\sigma_\infty) \quad  \text { strongly in } H^{2r}(\Omega)\times H^{2r}(\Omega),
$$
for some $r\in (1/2,1)$.
Furthermore, there exists a constant $\delta_{\infty}\in (0,1)$ such that
\be
\|\varphi_{\infty}\|_{L^{\infty}} \leq 1-\delta_{\infty}.
\label{sep10}
\ee
\et

\begin{remark}
$\mathrm{(1)}$ Similar results can be obtained when the spatial dimension is two. Indeed, in the two dimensional case, one can prove the instantaneous separation property for the phase function $\varphi$ when the viscous term vanishes, i.e., $\epsilon=0$, by extending the arguments in \cite{MZ04,GG}. $\mathrm{(2)}$ In $\mathrm{(H2)}$, we have assumed that $\alpha>0$, i.e., including the Oono's interaction in the analysis. Nevertheless, with minor modifications, the case $\alpha=0$ can be easily treated, keeping in mind that the mass is now conserved.
\end{remark}

\section{Global Weak Solutions}\label{ws1}
\setcounter{equation}{0}
In this section, we prove Theorem \ref{main1} on the existence and uniqueness of global weak solutions to problem \eqref{f1.a}--\eqref{ini0}.

\subsection{Existence}\label{sgs}

For the singular potential $\varPsi$ satisfying (H1), without loss of generality, we assume that  $\varPsi_0(0)=0$. Then we can approximate the singular part $\varPsi_0'$, e.g., as in \cite{MT}:
\begin{equation}
\varPsi'_ {0,\kappa}(r)=\left\{
\begin{aligned}
&\varPsi_0'(-1+\kappa) + \varPsi_0''(-1+\kappa)(r+1-\kappa),\quad r<-1+\kappa,\\
&\varPsi_0'(r),\qquad\qquad\qquad\qquad\qquad\qquad\qquad\ |r|\leq 1-\kappa,\\
&\varPsi_0'(1-\kappa) + \varPsi_0''(1-\kappa)(r-1+\kappa),\qquad\ \  r>1-\kappa,
\end{aligned}
\right.\label{vPsi}
\end{equation}
for a sufficiently small $\kappa\in(0,a_0)$.
Define
$$
\varPsi_{0,\kappa}(r)=\int_0^{r}\varPsi'_{0,\kappa}(s) \, ds,\quad \varPsi_\kappa(r)=\varPsi_{0,\kappa}(r)-\frac{\theta_0}{2}r^2.
$$
We can verify that $\varPsi_ {0,\kappa}''(r)\ge \theta>0$ and $\varPsi_ {0,\kappa}(r)\geq -L$ for $r\in \mathbb{R}$, where $L>0$ is a constant independent of $\kappa$. Moreover, it holds $\varPsi_{0,\kappa}(r)\leq \varPsi_{0}(r)$ for $r\in [-1,1]$ (see, e.g., \cite{GGW}).

Let $T>0$ be given. We consider the following approximate problem: looking for functions
and $(\varphi^\kappa,\mu^{\kappa},\sigma^\kappa)$ satisfying
\begin{align}
&\left \langle\partial_t \varphi^\kappa,\xi\right \rangle_{(H^1)',{H}^1} =-(\nabla \mu^{\kappa},\nabla\xi)-\alpha(\varphi^\kappa-c_0,\xi),
\label{pag4.d} \\
&\mu^{\kappa}=A\varPsi'(\varphi^{\kappa})-B\Delta \varphi^{\kappa}-\chi \sigma^{\kappa}+\epsilon\partial_t \varphi^\kappa, \label{pag1.a}\\
&\left \langle\partial_t \sigma^\kappa,\xi\right \rangle_{(H^1)',{H}^1}+(\nabla \sigma^\kappa,\nabla \xi) = \chi ( \nabla \varphi^\kappa,\nabla \xi),\qquad\,   \label{pag2.b}\\
&\varphi^{\kappa}|_{t=0}=\varphi_{0},\quad  \sigma^{\kappa}|_{t=0}=\sigma_{0},\label{pag5.e}
\end{align}
in $\Omega\times(0,T)$ for all $\xi \in H^1(\Omega)$.

For every $\kappa\in(0,a_0)$, the regularized problem \eqref{pag4.d}--\eqref{pag5.e} can be solved via a standard Galerkin scheme like in \cite{GL17e}. Then following the arguments in \cite[Appendix]{H}, one can derive uniform estimates of the approximate solutions $(\varphi^\kappa,\mu^{\kappa},\sigma^\kappa)$ with respect to the parameter $\kappa$ and time on a given interval $[0,T]$.
In particular, the additional viscous term $\epsilon\partial_t\varphi^\kappa$ in \eqref{pag1.a} yields some additional regularity on $\partial_t\varphi^\kappa$ such that $\epsilon\partial_t\varphi^\kappa\in L^2(0,T;L^2(\Omega))$. Then one can employ a compactness argument (see e.g., \cite{MT}) to pass to the limit as $\kappa\to 0$ to obtain a global weak solution $(\varphi, \mu, \sigma)$ of the original problem \eqref{f1.a}--\eqref{ini0} on an arbitrary interval $[0,T]$. Since the procedure is standard, we omit the details here.

\subsection{Uniform-in-time estimates}\label{ws}

In view of the above discussion, here we confine ourselves to derive sufficient uniform-in-time estimates that allow us to extend the weak solution $(\varphi, \mu, \sigma)$ to the whole interval $[0,+\infty)$, and thus complete the proof for existence part of Theorem \ref{main1}. We also note that the dissipative estimate obtained below will enable us to study the long-time behavior of problem \eqref{f1.a}--\eqref{ini0} in Section \ref{ce}. \medskip

 \textbf{First estimate}.  Testing the equation \eqref{test1.a} by $1$, we obtain
\be
\frac{d}{dt}\big(\overline{\varphi}(t)-c_0\big) +\alpha\big(\overline{\varphi}(t)-c_0\big)=0,
\label{conver1}
\ee
so that
\be
\overline{\varphi}(t)=c_0+e^{-\alpha t}\left(\overline{\varphi_0}-c_0\right),\quad\forall\,t\geq 0.\label{aver}
\ee
Similarly, choosing the test function $\xi = 1$ in \eqref{test2.b}, we have
\be
\overline{\sigma}(t)=\overline{\sigma_0}, \quad\forall\,t\geq 0.\label{1aver}
\ee

\textbf{Second estimate}. As it has been shown in \cite{GG}, the convex part of the free energy
$$
\mathcal{E}^*(\varphi)=\frac{B}{2}\|\nabla \varphi\|^2 + A\int_\Omega \varPsi_0(\varphi) dx, \quad \forall\, \varphi\in H^1(\Omega),
$$
is a proper, lower semi-continuous
and convex functional. Then for $(\varphi, \mu, \sigma)$, a weak solution in the sense of Definition \ref{maind}, it follows from the standard chain rule that
$$
\frac{d}{dt}\mathcal{E}^*(\varphi)=(\partial_t\varphi, \mu+\theta_0\varphi+\chi\sigma -\epsilon\partial_t\varphi).
$$
Hence, we can obtain the energy identity for weak solutions by testing \eqref{test4.d} with $\mu$, \eqref{test1.a} with $\partial_t \varphi$, \eqref{test2.b} with $\sigma-\chi\varphi$, and adding the resultants together,
\begin{align}
&\frac{d}{dt} \mathcal{E}(t)  +\mathcal{D}(t)+\int_\Omega\alpha(\varphi(t)-c_0)\mu(t)\, dx=0,\quad \forall\,t\geq 0,
\label{1menergy}
\end{align}
where
\begin{align*}
\mathcal{E}(t)&= \int_\Omega \Big( A\varPsi(\varphi(t))- \chi\sigma(t)\varphi(t)\Big)\, dx + \frac{B}{2}\|\nabla \varphi(t)\|^2
+\frac{1}{2}\|\sigma(t)\|^2,\notag\\
\mathcal{D}(t)& =\|\nabla \mu(t)\|^2 +\|\nabla(\sigma(t)-\chi\varphi(t))\|^2 +\epsilon\|\partial_t\varphi(t)\|^2.
\end{align*}
Since
\be
\|\varphi(t)\|_{L^{\infty}} \leq 1,\quad \forall\,t\geq 0,\label{phi}
\ee
we deduce from the Poincar\'{e}--Wirtinger inequality \eqref{poincare} and \eqref{aver}, \eqref{1aver} that
\begin{align}
\|\sigma\|&\le\|\sigma-\chi \varphi\|+\|\chi\varphi\|\notag\\
&\le C_P\|\nabla(\sigma-\chi\varphi)\| +|\Omega||\overline{\sigma -\chi\varphi}|+\|\chi \varphi\|\notag\\
&\le C_P\|\nabla(\sigma-\chi\varphi)\|+C_1,
\label{sig1}
\end{align}
where the constant $C_1>0$ only depends on $\chi$, $\Omega$, $|\overline{\sigma_0}|$.
We recall the following inequality due to the hypotheses (H1) (see, e.g., \cite{GG})
\be
\varPsi(r)\leq \varPsi(s) + \varPsi(r)(r-s) + \frac{K}{2}(r-s)^2\quad\forall\, r,\, s  \in(-1,1).\label{ine}
\ee
Then by \eqref{phi}--\eqref{ine} and Young's inequality, we can follow the argument in \cite{GG,H} to obtain that
\begin{align}
&\alpha\int_\Omega(\varphi-c_0)\mu\,dx\notag\\
&\quad = \alpha B\|\nabla \varphi\|^2+\alpha A\int_\Omega (\varphi-c_0)\varPsi'(\varphi)\,dx -\alpha\chi \int_\Omega (\varphi-c_0)\sigma\,dx \notag\\
&\qquad
+\alpha\epsilon\int_\Omega (\varphi-c_0)\partial_t\varphi\,dx
\notag\\
&\quad
\geq {\alpha B}\|\nabla \varphi\|^2
+\alpha A\int_\Omega \Big(\varPsi(\varphi)-\varPsi(c_0) -\frac{K}{2}(\varphi-c_0)^2\Big)\,dx
\notag\\
&\qquad
-\alpha\chi \int_\Omega (\varphi-c_0)\sigma\,dx +\alpha\epsilon\int_\Omega (\varphi-c_0)\partial_t\varphi\,dx
\notag\\
&\quad
\geq \min\Big\{\alpha,\frac{1}{4C_P^2}\Big\} \left(\mathcal{E}+ \chi \int_\Omega \sigma\varphi\, dx -\frac{1}{2}\|\sigma\|^2\right)
-\frac{1}{4C_P^2}\| \sigma\|^2
\notag \\
&\qquad
-\Big(\frac{\alpha AK}{2} +C_P^2\alpha^2\chi^2 +\frac{\alpha^2\epsilon}{2}\Big) \|\varphi-c_0\|^2 -\frac{\epsilon}{2}\|\partial_t\varphi\|^2
-A\alpha|\varPsi(c_0)||\Omega|
\notag\\
&\quad
\geq \min\Big\{\alpha,\frac{1}{4C_P^2}\Big\} \mathcal{E}
-\frac{\chi^2}{8C_P^2}\| \varphi\|^2 -\frac{1}{2C_P^2}\| \sigma\|^2
\notag\\
&\qquad
-\Big(\frac{\alpha AK}{2}+C_P^2\alpha^2\chi^2 +\frac{\alpha^2\epsilon}{2}\Big) \|\varphi-c_0\|^2 -\frac{\epsilon}{2}\|\partial_t\varphi\|^2
-A\alpha|\varPsi(c_0)\|\Omega|
\notag\\
&\quad \geq
\min\Big\{\alpha,\frac{1}{4C_P^2}\Big\}\mathcal{E}
-\frac{1}{2}\| \nabla(\sigma-\chi\varphi)\|^2 -\frac{\epsilon}{2}\|\partial_t\varphi\|^2
-C_2 \notag\\
&\quad
\geq \min\Big\{\alpha,\frac{1}{4C_P^2}\Big\} \mathcal{E} - \frac{1}{2}\mathcal{D} -C_2,
\label{mueeb}
\end{align}
where the constant $C_2>0$ only depends on $\alpha$, $A$, $B$, $c_0$, $\theta$, $\theta_0$, $\chi$, $\Omega$ and $\overline{\sigma_0}$.

Taking $c_{*}:=\min\{\alpha,1/(4C_P^2)\}$, we deduce from \eqref{1menergy} and \eqref{mueeb} that
\begin{align}
&\frac{d}{dt} \mathcal{E}(t)  + c_*\mathcal{E}(t) + \frac12 \mathcal{D}(t) \le C_2,\quad \forall\,t\geq 0.
\label{2menergy}
\end{align}
By Gronwall's lemma, we deduce from \eqref{2menergy} the following dissipative estimates
\begin{align}
& \mathcal{E}(t)  \leq \mathcal{E}(0)e^{-c_* t} + \frac{C_2}{c_*},\quad \forall\, t\geq 0,
\label{diss1}\\
& \int_{t}^{t+1} \mathcal{D}(\tau) d\tau \leq
2\mathcal{E}(0)e^{-c_* t} +2C_2(1+c_*^{-1}),\quad \forall\,t\geq 0.
\label{diss2}
\end{align}
As a consequence, we obtain from \eqref{diss1}, \eqref{diss2} and \eqref{phi} that
\begin{align}
&\left \| \varphi\right \|_{L^{\infty}(0,+\infty;H^1(\Omega))}
+\left \| \sigma\right \|_{L^{\infty}(0,+\infty;L^2(\Omega))}\le C,\label{prioria20} \\
& \| \sigma  \|_{L^{2}(0,T;H^1(\Omega))}
+ \| \partial_t\varphi\|_{L^2(0,T;L^2(\Omega))}
+ \| \nabla  \mu \|_{L^{2}(0,T;L^2(\Omega))} \le C_T,
\label{prioria4}
\end{align}
where the constant $C>0$ is independent of time, and  $C_T>0$ may depend on $T$ (with $T>0$ being an arbitrary but fixed final time).
In particular, the uniform-in-time estimates \eqref{prioria20} enable us to extend the weak solutions $(\varphi,\mu,\sigma)$ from an arbitrary interval $[0,T]$ to the whole interval $[0,+\infty)$, being uniformly bounded in the corresponding spaces.

\medskip
\textbf{Third estimate}.
Testing \eqref{test4.d} by $\xi=1$, using integration by parts and the relations \eqref{aver}, \eqref{1aver}, we get
\begin{align}
|\overline{\mu}|
&=|\Omega|^{-1}|A(\varPsi'(\varphi),1)+\epsilon(\partial_t \varphi,1)| \notag \\
& \le |\Omega|^{-1}A\| \varPsi'(\varphi)\|_{L^1}  +|\Omega|^{-1}\epsilon \alpha (|\overline{\varphi_0}|+|c_0|).
\label{average valuea}
\end{align}
The term $\| \varPsi'(\varphi)\|_{L^1}$ can be estimated as in \cite[Section 3]{GG} (see also  \cite{MZ04}) with minor modifications such that
\begin{equation}
\begin{aligned}
&\| \varPsi'(\varphi)\|_{L^1}
\le C(\|  \nabla \mu\| + \|\nabla \varphi\|+ \|\sigma\|+ \|  \partial_t \varphi\|)\|\nabla \varphi\|\leq C(\|  \nabla \mu\| + \|  \partial_t \varphi\|+1),
\label{varPsi}
\end{aligned}
\end{equation}
where the constant $C>0$ depends on the initial energy $\mathcal{E}(0)$ and coefficients of the system.
As a consequence, from the estimates \eqref{prioria20},
\eqref{prioria4}, \eqref{average valuea} and inequality \eqref{poincare},
 we obtain
\begin{equation}
\left \|  \mu  \right \|_{L^{2}(0,T;H^1(\Omega)) }\le C_T.
\label{mu0}
\end{equation}

\textbf{Fourth estimate}. Multiplying \eqref{test4.d} with $-\Delta\varphi$ and integrating over $\Omega$, we have (see, e.g. \cite[Section 3.2.1]{H}).
\begin{align}
A(\varPsi''(\varphi)\nabla\varphi,\nabla\varphi)+B\left \|  \Delta\varphi  \right \|^2
& \le \| \nabla \mu \| \|\nabla \varphi\|
+ |\chi|\| \nabla \sigma \| \|\nabla \varphi\|+\epsilon(\partial_t\varphi,\Delta\varphi)\notag\\
&\le C(\left \| \nabla \mu \right \|+\left \| \nabla \sigma  \right\| +\|\partial_t\varphi\|^2)+\frac{B}{2}\left \|  \Delta\varphi  \right \|^2 .
\label{varP21}
\end{align}
Thus, it follows from (H1) and \eqref{prioria4}, \eqref{varP21} that
$$B\left \|  \Delta\varphi  \right \|^2
 \le C(\left \| \nabla \mu \right \|+\left \| \nabla \sigma  \right \|+ \left \|  \nabla\varphi  \right \|^2+\|\partial_t\varphi\|^2),$$
which implies
\begin{equation}
\int_0^T\left \| \Delta \varphi(t)\right \|^2 \, dt\le C_T.\notag
\end{equation}
Then by the standard elliptic estimates for the Neumann problem, we obtain
\begin{equation}
\|  \varphi\|_{L^2(0,T;H^2(\Omega))}\le C_T,
\label{phiH2}
\end{equation}
where the constant $C_T$ depends on $\mathcal{E}(0)$, $\Omega$, and coefficients of the system. Then by comparison in \eqref{f4.d} and using \eqref{prioria20}, \eqref{prioria4}, \eqref{mu0} and \eqref{phiH2}, we get
\begin{align}
\|\varPsi'(\varphi)\|_{L^2(0,T;L^2(\Omega))}\leq C_T.
\label{PsiL2}
\end{align}

\textbf{Fifth estimate}. In order to obtain an uniform-in-time estimate for $\partial_t\varphi$, we rewrite system \eqref{test1.a}--\eqref{test2.b} in the following equivalent form:
\begin{align}
&\left \langle\partial_t \varphi,\xi\right \rangle_{(H^1)',{H}^1} = -(\nabla \tilde{\mu},\nabla\xi) -\alpha(\overline{\varphi}-c_0,\xi),\label{1pag4.d} \\
&\tilde{\mu}
=A\varPsi'(\varphi)-B\Delta \varphi-\chi \sigma +\epsilon\partial_t \varphi +\alpha \mathcal{N}(\varphi-\overline{\varphi}), \label{1pag1.a}\\
&\left \langle\partial_t \sigma,\xi\right \rangle_{(H^1)',{H}^1} +(\nabla \sigma,\nabla \xi) = \chi ( \nabla \varphi,\nabla \xi),\qquad\,   \label{1pag2.b}\\
&\varphi|_{t=0}=\varphi_{0},\quad  \sigma|_{t=0}=\sigma_{0},\label{1pag5.e}
\end{align}
in $\Omega\times(0,+\infty)$ for all $\xi \in H^1(\Omega)$. Then testing \eqref{1pag4.d} with $\tilde{\mu}$, \eqref{1pag1.a} with $\partial_t \varphi$, \eqref{1pag2.b} with $\sigma-\chi\varphi$, adding the resultants together, we obtain
\begin{align}
& \frac{d}{dt}\mathcal{F}(t)  +  \mathcal{G}(t)  = - \alpha(\overline{\varphi}(t)-c_0,\tilde{\mu}(t)),\quad \forall\,t\in[0,+\infty),
\label{menergyd}
\end{align}
where
\begin{align*}
\mathcal{F}(t)
&=\int_\Omega \Big(A\varPsi(\varphi(t))-\chi\sigma(t)\varphi(t)\Big)\, dx + \frac{B}{2}\|\nabla \varphi(t)\|^2
+\frac{1}{2}\|\sigma(t)\|^2 \notag\\
&\quad
+\frac{\alpha}{2} \|\nabla\mathcal{N} (\varphi(t)-\overline{\varphi}(t))\|^{2},\notag\\
\mathcal{G}(t)& = \|\nabla \tilde\mu(t)\|^2 +\|\nabla(\sigma(t)-\chi\varphi(t))\|^2 +\epsilon\|\partial_t\varphi(t)\|^2.
\end{align*}
Similar to \eqref{average valuea} and \eqref{varPsi}, we have
\begin{equation}
\| \varPsi'(\varphi)\|_{L^1}
\le C (1+\|  \nabla \tilde{\mu}\|+\|\partial_t \varphi\|),
\label{vaps}
\end{equation}
and thus
\begin{align}
|\overline{\tilde{\mu}}|
&\le C (1+\|  \nabla \tilde{\mu}\|+\|\partial_t \varphi\|).
\label{average value1}
\end{align}
By Young's inequality, we deduce from \eqref{aver} and \eqref{average value1} that
\be
\begin{aligned}
-\alpha(\overline{\varphi}-c_0,\tilde{\mu})&\le \alpha e^{-\alpha t}|\overline{\varphi_0}-c_0||\overline{\tilde{\mu}}|\\
&\le Ce^{-\alpha t}+ \frac12\|\nabla \tilde\mu(t)\|^2 +\frac{\epsilon}{2}\|\partial_t\varphi(t)\|^2.\notag
\end{aligned}
\ee
Inserting the above estimate into \eqref{menergyd}, we deduce that
\begin{align}
& \frac{d}{dt}\mathcal{F}(t)  + \frac12 \mathcal{G}(t)  \le  Ce^{-\alpha t},\quad \forall\,t\in[0,+\infty),
\label{dmenergy1}
\end{align}
which implies
\be
\begin{aligned}
	\frac{d}{d t} \Big(\mathcal{F}(t)+\frac{C}{\alpha} e^{-\alpha t}\Big) + \frac12 \mathcal{G}(t)\leq 0,\quad \forall\,t\in[0,+\infty),\label{i6}
\end{aligned}
\ee
such that $\mathcal{F}(t)+\frac{C}{\alpha} e^{-\alpha t}$ serves as a Lyapunov functional for  problem \eqref{f1.a}--\eqref{ini0}.

Hence, integrating \eqref{i6} with respect to time, we get
\begin{align}
& \mathcal{F}(t)  + \frac12\int_0^t \mathcal{G}(\tau)\,d\tau  \le \mathcal{F}(0)+C\alpha^{-1},\quad \forall\,t\in[0,+\infty).
\label{menergy1}
\end{align}
As a consequence,
\be\left \|\nabla  \tilde{\mu}\right \|_{L^{2}(0,+\infty;L^2(\Omega))} +\epsilon\|\partial_t\varphi\|_{L^{2}(0,+\infty;L^2(\Omega))}\le C,\label{phit}
\ee
and
\be
\|\nabla(\sigma-\chi\varphi)\|_{L^{2}(0,+\infty;L^2(\Omega))}\le C,\label{phit2}
\ee
where the constant $C$ may depend on $\alpha$, $A$, $B$, $\varPsi_0$, $\theta_0$, $\chi$, $\Omega$, $\|\varphi_0\|_{H^1}$ and $\|\sigma_0\|$.
In view of \eqref{test2.b} and \eqref{phit2}, we also obtain
\be
\|\partial_t\sigma\|^2_{L^{2}(0,+\infty; H^1(\Omega)')}\le C.\label{phit1}
\ee

\subsection{Continuous dependence}\label{uw}
We now prove the continuous dependence estimate that yields the uniqueness of global weak solutions. To this end, let $(\varphi_{1},\mu_1,\sigma_{1})$ and $(\varphi_{2},\mu_2,\sigma_{2})$ be two weak solutions  of  problem \eqref{f1.a}--\eqref{ini0} given by Theorem \ref{main} corresponding to the initial data $(\varphi_{01},\sigma_{01})$ and $(\varphi_{02}, \sigma_{02})$, respectively.
Denote their differences by
\begin{align*}
& (\varphi,\mu,\,\sigma)=(\varphi_{1} -\varphi_{2},\,\mu_1-\mu_2,\, \sigma_{1}-\sigma_{2}),\\
& (\varphi_0,\sigma_0)=(\varphi_{01}-\varphi_{02},\, \sigma_{01}-\sigma_{02}).
\end{align*}
Then we have
\begin{subequations}
	\begin{alignat}{3}
	&\left \langle \partial_t \varphi,\xi\right \rangle_{(H^1)',H^1}=- (\nabla \mu,\nabla \xi)-\alpha(\varphi, \xi),\label{test11.a} \\
	&\  \mu=A\varPsi'(\varphi_{1})-A\varPsi'(\varphi_{2})-B\Delta \varphi-\chi \sigma+\epsilon\partial_t \varphi ,\label{test44.d}\\
	&\left \langle\partial_t \sigma,\xi\right \rangle_{(H^1)',H^1} + (\nabla \sigma, \nabla \xi)=\chi(\nabla \varphi,\nabla \xi), \label{test22.b}
	\end{alignat}
\end{subequations}
for almost every $t\in[0,+\infty)$ and any $ \xi \in H^1(\Omega)$.
From \eqref{test11.a}, we infer that
\begin{align}
\frac{d}{dt}\overline{\varphi}
+\alpha\overline{\varphi}=0\quad\text{as well as}\quad \frac{1}{2}\frac{d}{dt}\overline{\varphi}^2
+\alpha\overline{\varphi}^2=0.
\label{diffmean0}
\end{align}
Next, taking the test function $\xi=\mathcal{N}(\varphi-\overline{\varphi})$ in \eqref{test11.a}, we obtain
\begin{equation}
\frac{1}{2}\frac{d}{dt}\|\varphi-\overline{\varphi}\|_{V_0'}^2 +(\mu,\varphi-\overline{\varphi}) +\alpha\|\varphi-\overline{\varphi}\|_{V_0'}^2=0.
\label{diffvmb}
\end{equation}
Following the arguments in \cite{GG} (see also \cite[Section 4]{H}), we can deduce that
\begin{align}
(\mu,\varphi-\overline{\varphi})
&\ge B\|\nabla \varphi\|^2-(A|\theta_0-\theta|+\chi^2)\|\varphi\|^2 -\frac{1}{2}\|\sigma\|^2 -\chi^2|\Omega|\overline{\varphi}^2 \notag\\
&\quad -A(\varPsi'(\varphi_{1})-\varPsi'(\varphi_{2}) ,\overline{\varphi}) +\frac{\epsilon}{2}\frac{d}{dt}\|\varphi-\overline{\varphi}\|^2,\notag
\end{align}
with
\begin{align}
&(A|\theta_0-\theta|+\chi^2)\|\varphi\|^2 \le\frac{B}{2}\|\nabla \varphi\|^2+C\|\varphi\|_{(H^1)'}^2.\notag
\end{align}
Thus, from \eqref{diffmean0}, \eqref{diffvmb} and using the equivalent norm on $(H^1)'$, we get
\begin{align}
& \frac{1}{2}\frac{d}{dt}(\|\varphi\|_{(H^1)'}^2 +\epsilon\|\varphi-\overline{\varphi}\|^2)
+\frac{B}{2}\|\nabla \varphi\|^2 +\alpha|\overline{\varphi}|^2\notag\\
&\le \frac{1}{2}\|\sigma\|^2 +C\|\varphi\|_{(H^1)'}^2
+A\big(\|\varPsi'(\varphi_{1})\|_{L^1} +\|\varPsi'(\varphi_{2})\|_{L^1}\big) |\overline{\varphi}|.
\label{starphi}
\end{align}
On the other hand, taking the test function $\xi=\mathcal{N}_1\sigma$ in \eqref{test22.b}, we get
\begin{align}
&\frac12\frac{d}{dt}\|\sigma\|_{(H^1)'}^2
+ \|\sigma\|^2=(\sigma, \mathcal{N}_1\sigma)+\chi(\nabla \varphi,\nabla \mathcal{N}_1\sigma),\label{uniquedifs}
\end{align}
where
\begin{align}
(\sigma, \mathcal{N}_1\sigma)+\chi(\nabla \varphi,\nabla \mathcal{N}_1\sigma)&\le \|\sigma\|_{(H^1)'}^2+|\chi|\|\nabla \varphi\|\|\nabla \mathcal{N}_1\sigma\|\notag\\
& \le \frac{B}{4}\|\nabla\varphi\|^2
+ C\|\sigma\|_{(H^1)'}^2.\label{starphi1}
\end{align}
Collecting the above estimates, we deduce from \eqref{starphi}--\eqref{starphi1} that
\begin{align}
&\frac12 \frac{d}{dt}(\|\varphi\|_{(H^1)'}^2 +\epsilon\|\varphi-\overline{\varphi}\|^2 +\|\sigma\|_{(H^1)'}^2) + \frac{B}{4}\|\nabla\varphi\|^2 + \frac12\|\sigma\|^2\notag\\
&\quad\leq C(\|\varphi\|_{(H^1)'}^2 +\|\sigma\|_{(H^1)'}^2) +A\big(\|\varPsi'(\varphi_{1})\|_{L^1} +\|\varPsi'(\varphi_{2})\|_{L^1}\big) |\overline{\varphi}|.
\label{uniA}
\end{align}
From the first equality in \eqref{diffmean0}, we have
\be
\overline{\varphi}(t)=e^{-\alpha t}\overline{\varphi_0},\quad \forall\,t\geq 0,
\label{aver1a}
\ee
and thanks to \eqref{prioria4}, \eqref{varPsi}, it follows that
\be
\int_0^T\big(\|\varPsi'(\varphi_{1}(\tau))\|_{L^1} +\|\varPsi'(\varphi_{2}(\tau))\|_{L^1}\big) d\tau\le C_T,\quad \forall\, T \in(0,+\infty).\notag
\ee
Therefore, by an application of Gronwall's lemma to \eqref{uniA}, we obtain
\be
\begin{aligned}
&\|\varphi(t)\|_{(H^1)'}^2 +\epsilon\|(\varphi-\overline{\varphi})(t)\|^2 +\|\sigma(t)\|_{(H^1)'}^2
+\int_0^t\|\varphi(\tau)\|_{H^1}^2 d\tau  +
\int_0^t\|\sigma(\tau)\|^2 d\tau \\
&\quad\le {C}_T\big(\|\varphi_0\|_{(H^1)'}^2 +\epsilon\|\varphi_0-\overline{\varphi_0}\|^2 +\|\sigma_0\|_{(H^1)'}^2+|\overline{\varphi_0}|\big),
\quad \forall\, t\in [0,T],
\end{aligned}
\label{difes}
\ee
where the constant ${C}_T>0$ depends on norms of the initial data, $\Omega$, the coefficients of the system and $T$. Combining \eqref{difes} with \eqref{aver1a}, we arrive the conclusion of Theorem \ref{main1}.
$\hfill\square$

\section{Global Strong Solutions}\label{ss}
\setcounter{equation}{0}
In this section, we prove Theorem \ref{main} on the existence and uniqueness of global strong solutions to problem \eqref{f1.a}--\eqref{ini0}. Since a strong solution is indeed a weak solution, then its uniqueness is a direct consequence of the continuous dependence estimate \eqref{difes}. Next, concerning the existence part, recalling the arguments in \cite[Corollary 4.1]{MZ04} for the viscous Cahn--Hilliard equation, we only need to derive sufficient \textit{a priori} estimates. Different from \cite{MZ04}, here we have to overcome several additional difficulties due to the Oono's term (i.e., loss of mass conservation) and the coupling with nutrient equation.

Hence, in what follows, we provide a formal derivation of \textit{a priori} estimates for $(\varphi(t),\sigma(t))$, assuming that they are sufficiently regular functions satisfying the additional assumption
\begin{align}
\|\varphi(t)\|_{L^\infty}< 1,\quad \forall\,t\geq 0.\label{linfi}
\end{align}
It is obvious that the lower-order estimates that we have obtained in Section \ref{ws} also hold for $(\varphi,\sigma)$.\smallskip

\textbf{First estimate}. Similar to \cite[Section 3]{MZ04}, we can rewrite the problem \eqref{f1.a}--\eqref{f4.d} in the following equivalent form by using \eqref{conver1}:
\be
\begin{aligned}
& \epsilon(\partial_t\varphi -\overline{\partial_{t}{\varphi}}) +\mathcal{N}(\partial_{t}\varphi -\overline{\partial_{t}{\varphi}}) + \alpha\mathcal{N}(\varphi-\overline{\varphi})\\
&\quad =B\Delta \varphi-A\varPsi^{\prime}(\varphi) +\chi\sigma +A\overline{\varPsi^{\prime}(\varphi)} -\chi\overline{\sigma}.
\label{reeq}
\end{aligned}
\ee
Differentiating \eqref{reeq} with respect to $t$, we get
\be
\begin{aligned}
&\epsilon\partial_t(\partial_t \varphi-\overline{\partial_{t}\varphi} )
+ \partial_t\mathcal{N}(\partial_{t}\varphi -\overline{\partial_{t}\varphi}) +\alpha\mathcal{N}(\partial_{t}\varphi -\overline{\partial_{t}\varphi})\\
&\quad=B\Delta \partial_{t}\varphi -A\varPsi^{\prime\prime}(\varphi)\partial_{t}\varphi
+\chi\partial_t\sigma +A\overline{\varPsi^{\prime\prime} (\varphi)\partial_{t}\varphi},
\end{aligned}\notag
\ee
where we have used the fact \eqref{1aver}.
Multiplying the above equation by $\partial_{t}\varphi-\overline{\partial_{t}\varphi}$,  and integrating over $\Omega$, we obtain
\begin{align}
& \frac{d}{dt}\mathcal{H}(t) + B\|\nabla \partial_{t}\varphi(t)\|^{2} +  \alpha\|\nabla\mathcal{N}(\partial_{t}\varphi(t) -\overline{\partial_{t}\varphi}(t))\|^{2}
=\mathcal{J}_1(t)+\mathcal{J}_2(t),\quad \forall\,t\geq 0,
\label{menergyd2}
\end{align}
where
\begin{align*}
\mathcal{H}(t)& =\frac{\epsilon}{2}\|\partial_{t}\varphi(t) -\overline{\partial_{t}\varphi}(t)\|^{2} +\frac12\|\nabla\mathcal{N}(\partial_{t}\varphi(t) -\overline{\partial_{t}\varphi}(t))\|^{2},\notag\\
\mathcal{J}_1(t)& =(\chi\partial_t\sigma(t),\partial_{t}\varphi(t) -\overline{\partial_{t}\varphi}(t)),\notag\\
\mathcal{J}_2(t)&=(-A\varPsi^{\prime\prime}(\varphi(t)) \partial_{t}\varphi(t) +A\overline{\varPsi^{\prime\prime}(\varphi)\partial_{t}\varphi}(t), \partial_{t}\varphi(t)-\overline{\partial_{t}\varphi}(t)).
\end{align*}
The two terms on the right-hand side of \eqref{menergyd2} can be estimated as follows:
\be
\begin{aligned}
\mathcal{J}_1
&=-(\chi\Delta\mathcal{N}\partial_t\sigma, \partial_{t}\varphi-\overline{\partial_{t}\varphi})
= \chi (\nabla\mathcal{N}\partial_t\sigma,\nabla\partial_{t}\varphi)\\
	&\le \frac{B}{2}\|\nabla \partial_{t}\varphi\|^2 +\frac{\chi^2}{2B}\|\nabla\mathcal{N}\partial_t\sigma\|^2,
\end{aligned}
\notag
\ee
while for $\mathcal{J}_2$, we infer from the fact $\varPsi^{\prime\prime}(\varphi)\ge-K$ (thanks to (H1), \eqref{linfi}) and \eqref{aver} that
\be
\begin{aligned}
	&(-A\varPsi^{\prime\prime}(\varphi)\partial_{t} \varphi + A\overline{\varPsi^{\prime\prime} (\varphi)\partial_{t}\varphi}, \partial_{t}\varphi-\overline{\partial_{t}\varphi})\\
&\quad\le
AK\|\partial_{t}\varphi\|^{2} +A|\Omega|\,\overline{\varPsi^{\prime\prime}(\varphi) \partial_{t}\varphi} \,\overline{\partial_{t}\varphi}\\
&\quad =
AK\,\|\partial_{t}\varphi\|^{2} + A|\Omega|\,\frac{d}{dt}\Big(\overline{\varPsi^{\prime}(\varphi)}\, \overline{\partial_{t}\varphi}\Big)  -A|\Omega|\,\overline{\varPsi^{\prime} (\varphi)} \, \frac{d}{dt}\overline{\partial_{t}\varphi}\\
&\quad\le AK \|\partial_{t}\varphi\|^{2} +A|\Omega| \frac{d}{dt} \Big(\overline{\varPsi^{\prime}(\varphi)}\, \overline{\partial_{t}\varphi}\Big)
+  A\alpha^2 e^{-\alpha t}  \|\varPsi^{\prime}(\varphi)\|_{L^{1}} |\overline{\varphi_{0}} -c_0|.
\notag
\end{aligned}
\ee
Then it follows from \eqref{menergyd2}  and the above estimates that
\be
\begin{aligned}
&\frac{d}{dt}\mathcal{H}(t)  + \frac{B}{2} \|\nabla \partial_{t}\varphi(t)\|^{2} +  \alpha\|\nabla\mathcal{N}(\partial_{t}\varphi(t) -\overline{\partial_{t}\varphi}(t))\|^{2}\\
&\quad \le C(e^{-\alpha t}\|\varPsi^{\prime}(\varphi)\|_{L^{1}} +\|\partial_t\varphi\|^2 +\|\nabla\mathcal{N}\partial_t\sigma\|^2) +A|\Omega|\frac{d}{dt} \Big(\overline{\varPsi^{\prime}(\varphi)}\, \overline{\partial_{t}\varphi}\Big),\quad \forall\, t\geq 0,
\label{energyd3}
\end{aligned}
\ee
The initial value of $\mathcal{H}$ is give by
\begin{align}
\mathcal{H}(0)
& =
\frac{\epsilon}{2} \|\partial_{t}\varphi_0 -\overline{\partial_{t}\varphi_0}\|^{2} +\frac12\|\nabla\mathcal{N}(\partial_{t}\varphi_0 -\overline{\partial_{t}\varphi_0})\|^{2}
=: \mathcal{H}_0,\label{iniE01}
\end{align}
where in view of \eqref{reeq}, the initial value for the time derivative of $\varphi$ denoted by $\partial_{t}\varphi_0$ is understood as
\begin{align}
\partial_{t}\varphi_0& = (\partial_{t}\varphi(t))|_{t=0}\notag\\
&=(\epsilon+\mathcal{N})^{-1}\big(B\Delta \varphi_0-A\varPsi^{\prime}(\varphi_0) +\chi\sigma_0 +A\overline{\varPsi^{\prime}(\varphi_0)} -\chi\overline{\sigma_0} -\alpha\mathcal{N}(\varphi_0-\overline{\varphi_0})\big)\notag\\
&\quad -\alpha(\overline{\varphi_0}-c_0).
\end{align}
From our assumption on the initial data, it follows that $\|\partial_{t}\varphi_0\|\leq C$, where $C>0$ may depend on $\|\varphi_0\|_{H^2}$, $\|\sigma_0\|$,  $\delta_0$, $\Omega$ and coefficients of the system. As a result, $\mathcal{H}_0\leq C$. On the other hand, recalling the relation \eqref{varPsi} and the dissipative estimate \eqref{diss2}, we see that
\begin{align}
&\int_0^{+\infty} e^{-\alpha t}\|\varPsi^{\prime}(\varphi(t))\|_{L^{1}}\,dt =\sum_{n=0}^\infty \int_n^{n+1} e^{-\alpha t}\|\varPsi^{\prime}(\varphi(t))\|_{L^{1}}\,dt \notag \\
&\quad \leq \sum_{n=0}^\infty e^{-\alpha n} \int_n^{n+1} \|\varPsi^{\prime}(\varphi(t))\|_{L^{1}}\,dt  \le C,
\label{a3}
\end{align}
where ${C}>0$ is a constant depending on norms of the initial data, $\Omega$ and the coefficients of the system. Thus, integrating \eqref{energyd3} with respect to time and using the estimates \eqref{phit}, \eqref{phit1}, \eqref{a3}, we obtain
\begin{align}
& \mathcal{H}(t)  +\frac{B}{2} \int_0^t \|\nabla \partial_t\varphi(\tau)\|^2\,d\tau \notag\\
&\quad  \leq  C+  A|\Omega|\,\Big(\overline{\varPsi^{\prime}(\varphi(t))}\, \overline{\partial_{t}\varphi(t)}\Big) -A|\Omega|\,\Big(\overline{\varPsi^{\prime}(\varphi_0)}\, \overline{\partial_{t}\varphi_0}\Big) \notag \\
&\quad \leq C+  A|\Omega|\,\Big(\overline{\varPsi^{\prime}(\varphi(t))}\, \overline{\partial_{t}\varphi(t)}\Big), \quad \forall\,t\geq 0.
\label{menergy2}
\end{align}
From the definition of $\mathcal{H}(t)$, it follows that
\be
\|\partial_t\varphi(t)-\overline{\partial_t\varphi(t)}\|^2\le 2\epsilon^{-1}A|\Omega|\overline{\varPsi^{\prime}(\varphi(t))}\, \overline{\partial_{t}\varphi(t)}+C,\quad \forall\, t\in[0,+\infty),\label{e9}
\ee
which further implies
\be
\|\partial_t\varphi(t)\|^2\le 4\epsilon^{-1}A|\Omega|\overline{\varPsi^{\prime}(\varphi(t))}\, \overline{\partial_{t}\varphi(t)} +C,\quad \forall \, t\in[0,+\infty).\label{e7}
\ee

\textbf{Second estimate}.
Testing \eqref{reeq} with $\Delta \varphi$ and using the facts $\int_\Omega \Delta\varphi dx=0$ and  $\varPsi^{\prime\prime}(\varphi)\ge -K$, we obtain
\begin{align}
&B\|\Delta \varphi\|^2+ \alpha\|\varphi -\overline{\varphi}\|^2 \notag\\
&\quad=
(\mathcal{N}(\partial_{t}\varphi -\overline{\partial_{t}{\varphi}}),\Delta \varphi) +A(\varPsi^{\prime}(\varphi), \Delta \varphi) -\chi(\sigma,\Delta \varphi) +\epsilon(\partial_t\varphi,\Delta \varphi)\notag\\
&\quad=
-(\partial_{t}\varphi-\overline{\partial_{t}{\varphi}}, \varphi) - A(\varPsi^{\prime\prime}(\varphi)\nabla \varphi,\nabla \varphi)-\chi(\sigma,\Delta \varphi)+(\epsilon\partial_t\varphi,\Delta \varphi) \notag\\
	&\quad\le \frac{1}{2}\|\partial_t\varphi\|^2 +\frac{1}{2}\|\varphi\|^2 +|\Omega||\overline{\varphi}||\overline{\partial_t\varphi}|
+AK\|\nabla\varphi\|^2 +\frac{B}{2}\|\Delta \varphi\|^2 \notag \\
&\qquad  +\frac{\chi^2}{B}\|\sigma\|^2 +\frac{\epsilon^2}{B}\|\partial_t\varphi\|^2.
\label{lapphi}
\end{align}
From \eqref{aver}, \eqref{prioria20} and \eqref{lapphi}, we deduce that
\be
\begin{aligned}
\frac{B}{2}\|\Delta \varphi\|^2+ \alpha\|\varphi-\overline{\varphi}\|^2 \le \Big(\frac{\epsilon^2}{A}+\frac{1}{2}\Big) \|\partial_t\varphi\|^2+C,
\label{e8}
\end{aligned}
\ee
where ${C}>0$ is a constant depending on norms of the initial data, $\Omega$, the coefficients of the system.

\textbf{Third estimate}. It follows from the equation \eqref{reeq} and the estimates \eqref{prioria20}, \eqref{e7}, \eqref{e8} that
\begin{align}
&\|\varPsi^{\prime}(\varphi) -\overline{\varPsi^{\prime}(\varphi)}\|^2 \notag\\
&\quad=
\frac{1}{A^2}\|B\Delta \varphi +\chi(\sigma-\overline{\sigma}) -\epsilon(\partial_t\varphi -\overline{\partial_{t}{\varphi}}) -\mathcal{N}(\partial_{t}\varphi -\overline{\partial_{t}{\varphi}}) -\alpha\mathcal{N}(\varphi-\overline{\varphi})\|^2 \notag \\
&\quad\le
\frac{C}{A^2}(B^2\|\Delta \varphi\|^2 +(\epsilon^2+C)\|(\partial_t\varphi -\overline{\partial_{t}{\varphi}})\|^2 +\alpha^2\|\mathcal{N}(\varphi-\overline{\varphi})\|^2 +\chi^2\|\sigma-\overline{\sigma}\|^2) \notag \\
&\quad\le C\Big(\frac{B}{2}\|\Delta \varphi\|^2+ \alpha\|\varphi-\overline{\varphi}\|^2\Big) +C\|\partial_t\varphi\|^2+C\notag\\
&\quad \leq C_3(1+\epsilon^2)\overline{\varPsi^{\prime}(\varphi)}\, \overline{\partial_{t}\varphi} + C.
\label{e1}
\end{align}
From the assumption on initial data, we have $\bar{\delta}_0:=1-|\overline{\varphi_0}|\in (0,1]$. On the other hand, by $c_0\in (-1,1)$ and \eqref{aver}, we have
$\overline{\varphi}(t)\in [c_0,\overline{\varphi_0}]\subset [c_0,1-\bar{\delta}_0]$ if $\overline{\varphi_0}\geq c_0$, or $\overline{\varphi}(t)\in [\overline{\varphi_0},c_0)\subset[-1+\bar{\delta}_0,c_0)$ if $\overline{\varphi_0}<c_0$. Hence, we have
\be
|\overline{\varphi}(t)|\leq 1-\delta_1,\quad \forall\,t\geq 0,\quad \text{with}\ \delta_1=\min\{1-c_0,\, c_0+1,\,\bar{\delta}_0\}>0. \label{distm}
\ee
Thanks to \eqref{distm}, we can apply \cite[Proposition A.2]{MZ04} to obtain
\be
\begin{aligned}
|\overline{\varPsi^{\prime}(\varphi)}|
&\le C\|\varPsi^{\prime}(\varphi) -\overline{\varPsi^{\prime}(\varphi)}\|_{L^1} +C,\label{e21}
\end{aligned}
\ee
where the constant $C>0$ depends on $\delta_1$.
From Young's inequality, we have
\be
\begin{aligned}
|\overline{\varPsi^{\prime}(\varphi)}| 	
&\le \frac{1}{2C_3(1+\epsilon^2)}\|\varPsi^{\prime}(\varphi) - \overline{\varPsi^{\prime}(\varphi)}\|^2 +C. \label{e2}
\end{aligned}
\ee
In light of \eqref{e1} and \eqref{e2}, we have
\be
|\overline{\varPsi^{\prime}(\varphi(t))}|\le C,\quad \forall\, t\in [0,+\infty),
\label{e3}
\ee
where ${C}$ is a constant depending on norms of the initial data, $\Omega$, the coefficients of the system.
By \eqref{menergy2}, \eqref{e8}, \eqref{e1} and \eqref{e3}, we obtain that
\be
\|\partial_t\varphi(t)\|^2+\|\Delta \varphi(t)\|^2+\|\varPsi'(\varphi(t))\|^2 +\int_0^t\|\nabla \partial_t\varphi(\tau)\|^2\ d\tau\le C, \quad \forall\, t\in [0,+\infty).
\label{as1}
\ee
The constant ${C}$ may depend on the norm of the initial data, $\Omega$ and coefficients of the system, but is independent on time.
Thus, the above estimate together with \eqref{prioria20} and \eqref{phit} yield
\be
\| \partial_t\varphi\|_{ L^{\infty}(0,+\infty;L^2(\Omega))} +\|  \partial_t\varphi\|_{L^2(0,+\infty;H^1(\Omega))}+\| \varphi\|_{ L^{\infty}(0,+\infty;H^2(\Omega))}\le C.\label{evar}
\ee
From \eqref{e1} and \eqref{e3}, we have
\be
\| \varPsi^{\prime}(\varphi)\|_{ L^{\infty}(0,+\infty;L^2(\Omega))}\le C.\label{evar1}
\ee
Then by the estimates \eqref{prioria20} and \eqref{evar}--\eqref{evar1}, we infer from the equation \eqref{f4.d} that
\be
\| \mu\|_{ L^{\infty}(0,+\infty;L^2(\Omega))}\le C.
\label{evar2}
\ee

\textbf{Fourth estimate}. Multiplying \eqref{f2.b} with $-\Delta\sigma$ and integrating over $\Omega$, we get
\be
\frac{1}{2} \frac{d}{d t}\|\nabla\sigma\|^{2} +\|\Delta \sigma\|^2 = \chi (\Delta \varphi, \Delta\sigma).
\notag
\ee
An application of Young's inequalities leads to
\begin{align*}
\chi (\Delta \varphi,\Delta\sigma)
&\le  \frac12\|\Delta\sigma\|^2 +\frac{\chi^2}{2}\|\Delta \varphi\|^2, \\
 \|\nabla\sigma\|^{2} &=-(\sigma,\Delta\sigma)\le\frac14\|\Delta\sigma\|^2 +\|\sigma\|^2,
\end{align*}
which together with the estimates \eqref{prioria20}, \eqref{evar} imply that
\begin{align}
&\frac{1}{2} \frac{d}{d t}\|\nabla\sigma\|^{2}+\|\nabla\sigma\|^{2}+\frac{1}{4}\|\Delta \sigma\|^2 \leq C,\notag
\end{align}
From Gronwall's lemma and \eqref{prioria20}, \eqref{prioria4}, we obtain
\be
\| \sigma\|_{L^{\infty}(0,+\infty; H^1(\Omega))} \le C,\quad \| \sigma\|_{L^{2}(0,T;H^2(\Omega))}\leq C_T.
\notag
\ee
By comparison with \eqref{f2.b}, we infer from \eqref{evar} and the above estimate that
\be
\partial_t \sigma\in L^{2}_{loc}(0,+\infty;L^2(\Omega)).
\notag
\ee
Next, differentiating \eqref{f2.b} with respect to time, multiplying the resultant by $\partial_t\sigma$ and integrating over $\Omega$, we get
\begin{align}
\frac12 \frac{d}{dt} \|\partial_t\sigma\|^2 + \|\nabla \partial_t\sigma\|^2 \leq \frac12\|\nabla \partial_t \sigma\|^2+\frac{\chi^2}{2}\|\nabla \partial_t\varphi\|^2.
\label{sigH2}
\end{align}
Noticing that
$$
\|\partial_t\sigma(0)\|=\|\Delta\sigma_0-\chi\Delta \varphi_0\|\leq \|\sigma_0\|_{H^2}+|\chi|\|\varphi_0\|_{H^2}
$$
and invoking the estimate \eqref{evar}, we conclude
\begin{align}
\|\partial_t\sigma(t)\|^2+\int_0^t\|\nabla \partial_t\sigma(\tau)\|^2d\tau \leq C,\quad \forall\, t\geq 0.\label{sigtL2}
\end{align}
As $\overline{\partial_t\sigma}=0$, we deduce from the Poincar\'{e}--Wirtinger inequality and \eqref{sigtL2} that
\begin{align}
\|\partial_t\sigma\|_{L^2(0,+\infty;H^1(\Omega))}\le C.\notag
\end{align}
By the elliptic estimate for the Neumann problem and \eqref{evar}, \eqref{sigtL2}, we have
\begin{align}
\|\sigma\|_{L^\infty(0,+\infty;H^2(\Omega))}\le C,\notag
\end{align}
which together with the Sobolev embedding theorem yields
\begin{align}
\|\sigma\|_{L^\infty(0,+\infty;L^\infty(\Omega))}\le C.
\label{sigLi}
\end{align}

\textbf{Fifth estimate}. As in \cite{MZ04}, we rewrite \eqref{reeq} as
\be
\epsilon\partial_t\varphi- B\Delta \varphi +A\varPsi_0^{\prime}(\varphi)=\tilde{h},\label{me1}
\ee
where the right-hand side is give by
\be\tilde{h} = \chi\sigma -\chi\overline{\sigma}  + A\overline{\varPsi^{\prime}(\varphi)} +\epsilon\overline{\partial_{t}\varphi}  -\mathcal{N}(\partial_{t}\varphi -\overline{\partial_{t}\varphi}) -\alpha\mathcal{N}(\varphi-\overline{\varphi}) + A\theta_0\varphi.\notag
\ee
Then, thanks to the estimates \eqref{as1}, \eqref{evar} and  \eqref{sigLi}, we have
\be
\|\tilde{h}\|_{L^{\infty}\left(0, +\infty; L^{\infty}(\Omega)\right)} \leq C_h,
\notag
\ee
where the positive  constant ${C}_h$ may depend on the norm of the initial data, $\Omega$ and coefficients of the system.
The above estimate enables us to invoke the following auxiliary ODE systems as in \cite[Section 3]{MZ04}:
\begin{equation}
\begin{cases}
&\epsilon\displaystyle{\frac{d}{dt}}y_\pm(t) +A\varPsi_0^{\prime}(y_\pm(t))=\pm C_h, \quad \forall\, t\in[0,+\infty),\\
&y_\pm(0)=\pm(1-\delta_0).
\end{cases}
\label{ode1}
\end{equation}
By the Picard--Lindel\"{o}f theorem and the comparison principle, we see that the solutions $y_+$ and $y_-$ are well-defined for $t\geq 0$ and there exists a constant $\delta=\delta(\delta_0,C_h)\in (0,\delta_0]$ satisfying (cf. \cite[Proposition A.3]{MZ04})
\be
y_+(t)\le 1-\delta,\quad y_-(t)\ge -1+\delta,\quad \forall\, t\ge 0. \notag
\ee
Due to the comparison principle for second-order parabolic equations, for \eqref{me1}, we see that
\be
-1+\delta\leq y_-(t)\leq \varphi(t, x) \leq y_+(t)\le 1-\delta, \quad \forall\, t \geq 0, \ x \in \Omega.\notag
\ee
As a consequence, it holds
\be
\|\varphi(t)\|_{L^{\infty}} \leq 1-\delta,\quad \forall\, t \in[0,+\infty),
\label{sep6}
\ee
which implies that, if the initial value of $\varphi(t)$ is strictly separated from the pure states $\pm 1$, then it is uniformly separated from $\pm 1$ for all time.

\textbf{Sixth estimate}. Consider the elliptic problem
\begin{equation}
\begin{cases}
-B\Delta \varphi =\mu -A\varPsi'(\varphi) +\chi\sigma-\epsilon\partial_t \varphi,\quad \text{in}\ \Omega,\\
 {\partial}_{\bm{n}}\varphi=0,\qquad \qquad \qquad \qquad \qquad \qquad  \ \, \text{on}\ \partial\Omega.
\end{cases}\notag
\end{equation}
From (H1), \eqref{mu0}, \eqref{evar} and the standard elliptic estimate for the Neumann problem, we infer  that
\begin{align}
\|\varphi\|_{L^2(0,T;H^3(\Omega))}\leq C_T.\label{h3}
\end{align}
In a similar manner, invoking \eqref{f2.b}, the above estimate \eqref{h3} together with \eqref{sigtL2} yields
\begin{align}
\|\sigma\|_{L^2(0,T;H^3(\Omega))}\leq C_T.\notag
\end{align}

In summary, based on the above \textit{a priori} estimates, we are able to prove the existence of global strong solutions to problem \eqref{f1.a}--\eqref{ini0} by using a suitable approximation scheme (e.g., the one in Section \ref{sgs}) and a standard compactness argument (cf.  \cite{MZ04,MT}). This completes the proof of Theorem \ref{main}.

\section{Long-time Behavior}
\label{ce}
\setcounter{equation}{0}
In this section, we aim to study the long-time behavior of global weak solutions to problem \eqref{f1.a}--\eqref{ini0}.
\subsection{Instantaneous regularity}\label{ss1}
First, we show the instantaneous regularity of weak solutions for $t>0$, in particular, the instantaneous separation from pure states $\pm 1$.

\textbf{Proof of Corollary \ref{cr}}. For any given initial datum $(\varphi_0, \sigma_0)$ satisfying
$\varphi_{0}\in H^1(\Omega)$, $\sigma_{0}\in L^2(\Omega)$ with $\|  \varphi_{0} \|_{L^{\infty}} \le 1$ and $|\overline{\varphi_0}|<1$, let  $(\varphi,\sigma)$ be the corresponding unique global weak solution to problem \eqref{f1.a}--\eqref{ini0} defined by Theorem \ref{cr}. For any $\tau\in (0,1]$, we deduce from Theorem \ref{main1}, in particular the estimates \eqref{prioria4}, \eqref{phiH2} and \eqref{PsiL2} that there exists $\tau_{0} \in(\frac{\tau}{6}, \frac{\tau}{3})$ such that
\be
\varphi(\tau_{0}) \in H^{2}_N(\Omega),\ \sigma(\tau_{0})\in H^1(\Omega),\ \varPsi'(\varphi(\tau_{0}))\in L^2(\Omega),\  \partial_{t}\varphi(\tau_{0})\in L^2(\Omega),\label{varphit4}
\ee
where
\begin{align}
\partial_t\varphi(\tau_0)
&=
(\epsilon+\mathcal{N})^{-1}\big(B\Delta \varphi(\tau_0) -A\varPsi^{\prime}(\varphi(\tau_0))   +A\overline{\varPsi^{\prime}(\varphi(\tau_0))} +\chi\sigma(\tau_0) -\chi\overline{\sigma(\tau_0)}\big) \notag\\ &\quad -\alpha(\epsilon+\mathcal{N})^{-1} \mathcal{N}(\varphi(\tau_0)-\overline{\varphi(\tau_0)})  -\alpha(\overline{\varphi(\tau_0)}-c_0).
\label{varphit4a}
\end{align}
To conclude the result of Corollary \ref{cr}, a natural idea is to prove the existence of a unique global strong solution on $[\tau_0,+\infty)$ with the initial data given by \eqref{varphit4}--\eqref{varphit4a}. However, the regularity stated in \eqref{varphit4} is not sufficient for the application of Theorem \ref{main}. To achieve the goal, we extend the approximate procedure in \cite{MZ04} for the viscous Cahn--Hilliard equation. The proof consists of several steps.

\textbf{Step 1}. For $(\varphi(\tau_0), \sigma(\tau_0))$ given by \eqref{varphit4}, we can find a sequence of approximating functions  $(\varphi_0^{(n)},\sigma_0^{(n)})$ with  $n\in\mathbb{N}^{+}$, such that  $\varphi_0^{(n)}$ stays away from the pure states $\pm 1$ and $\sigma_0^{(n)}\in H^2_N(\Omega)$.
To this end, we take
\be
\varphi^{(n)}_0=\Big(1-\frac{1}{2n}\Big)\varphi(\tau_{0}),\quad \sigma^{(n)}_0=\Big(1+\frac{1}{2n}\mathcal{A}_N\Big)^{-1} \sigma(\tau_0). \notag
\ee
It easily follows that $\sigma_0^{(n)}\in H^2_N(\Omega)$ satisfies $\|\sigma_0^{(n)}\|_{H^1} \to \|\sigma(\tau_0)\|_{H^1}$ as $n \to +\infty$. Next, as in \cite{MZ04}, $\varphi^{(n)}_0\in H^2_N(\Omega)$ satisfies $\|\varphi^{(n)}_0\|_{L^{\infty}}\le 1-\frac{1}{2n}$ and
\begin{align*}
\|\varphi^{(n)}_0\|_{H^2} \to \|\varphi(\tau_0)\|_{H^2}, \quad   \|\varPsi'(\varphi^{(n)}_0)\|\to \|\varPsi'(\varphi(\tau_0))\|
\quad \text{as}\ n\to+\infty.
\end{align*}
Denote
\begin{align}
\varphi^{(n)}_1
&:=(\epsilon+\mathcal{N})^{-1}\big(B\Delta \varphi^{(n)}_0 -A\varPsi^{\prime}(\varphi^{(n)}_0)
+A\overline{\varPsi^{\prime}(\varphi^{(n)}_0)} +\chi\sigma^{(n)}_0 -\chi\overline{\sigma^{(n)}_0}\big) \notag  \\
&\quad -\alpha(\epsilon+\mathcal{N})^{-1} \mathcal{N}(\varphi^{(n)}_0-\overline{\varphi^{(n)}_0})  -\alpha(\overline{\varphi^{(n)}_0}-c_0).
\notag
\end{align}
Thanks to \eqref{varphit4a} and the Lebesgue's dominated convergence theorem, it holds (cf. \cite[pp. 562]{MZ04})
\be
\lim_{n\to+\infty} \|\varphi^{(n)}_1\|= \|\partial_t\varphi(\tau_{0})\|. \notag
\ee

\textbf{Step 2}.
Taking $(\varphi^{(n)}_0,\sigma^{(n)}_0)$ as the initial data, we can now apply Theorem \ref{main} to conclude that there exists a unique strong solution $(\varphi^{(n)},\mu^{(n)},\sigma^{(n)})$ to problem \eqref{f1.a}--\eqref{ini0}, satisfying
\be
\begin{aligned} &\|\partial_t\varphi^{(n)}\|_{L^{\infty}(\tau_{0},+\infty; L^2(\Omega)))\cap L^{2}(\tau_0,+\infty;H^1(\Omega))} +\|\varphi^{(n)} \|_{ L^{\infty}(\tau_{0},+\infty;H^2(\Omega))}\\
&\quad
+\| \mu^{(n)}\|_{ L^{\infty}(\tau_0,+\infty;L^2(\Omega))} +\|\varPsi'(\varphi^{(n)})\|_{L^{\infty}(\tau_{0},+\infty; L^2(\Omega))}\\
&\quad +\|\sigma^{(n)} \|_{L^{\infty}(\tau_{0},+\infty;H^1(\Omega))} \le C,
\label{uni}
\end{aligned}
\ee
and
\be
\|  \mu^{(n)}  \|_{L^{2}(\tau_0,T;H^1(\Omega)) }+\|\partial_t\sigma^{(n)}\|_{L^{2}(\tau_0,T;L^2(\Omega))}+\| \sigma^{(n)}\|_{L^{2}(\tau_0,T;H^2(\Omega))}\leq C_T.\label{sigt1}
\ee
Here, the positive constants $C$, $C_T$ may depend on $\|\varphi_0\|_{H^1}$, $\|\sigma_0\|$, $\overline{\varphi_0}$, $\Omega$, coefficients of the system and $\tau_0$ (and thus on $\tau$), but is independent of $n$ according the approximating property of the initial data shown in Step 1. Of course, the constant $C_T$ may also depend on $T$ with $T\geq \tau_0$.

Now for the approximate solution $\sigma^{(n)}$, we infer from \eqref{sigt1} that there exists $\tau_1\in(\tau_0,\frac{2\tau}{3})$ such that  $\partial_t \sigma^{(n)}(\tau_1)\in L^2(\Omega)$ with
\begin{align}
\|\partial_t \sigma^{(n)}(\tau_1)\| \leq C(\tau,\tau_1),\label{tau1}
\end{align}
where the constant $C(\tau, \tau_1)$ depends on $\tau$ and $\tau_1$, but is independent of $n$. We note that $\tau_1$ can be choosen arbitrary close to $\tau_0$.
For any given $h > 0$, we denote the
difference quotient of a function $f$ by $\partial^h_tf(t)=h^{-1}(f(t + h) - f(t))$. Applying it to \eqref{f2.b}, we get
\be
\frac{d}{dt}\partial_{t}^h \sigma^{(n)} -\Delta \partial_{t}^h \sigma^{(n)}
=-\chi\Delta \partial_{t}^h \varphi^{(n)}.
\label{2tphi}
\ee
Multiplying \eqref{2tphi} by  $\partial_{t}^h \sigma^{(n)}$ and integrating over $\Omega$, it follows that
\begin{equation}
\begin{aligned}
&\frac{1}{2} \frac{d}{d t}\|\partial_{t}^h \sigma^{(n)}\|^{2} +\|\nabla \partial_{t}^h \sigma^{(n)}\|^2 =-(\chi\Delta \partial_{t}^h \varphi^{(n)},\partial_{t}^h \sigma^{(n)}).
\label{3diffsig}
\end{aligned}
\end{equation}
Like in Section 4, from Young's inequality, we have
\begin{align}
&\frac{1}{2}\frac{d}{dt} \|\partial_{t}^h\sigma^{(n)}\|^2
 +\frac{1}{2}\|\nabla\partial_{t}^h\sigma^{(n)}\|^2
\le \frac{\chi^2}{2}\|\nabla\partial_{t}^h \varphi^{(n)}\|^2.
\label{3diffsig1}
\end{align}
Recalling the estimates \eqref{uni} and \eqref{tau1}, we can integrate \eqref{3diffsig1} with respect to time on $(\tau_1,+\infty)$, then pass to the limit as $h\to 0$ to get
$$
\|\partial_{t}\sigma^{(n)}\|_{L^{\infty}\left(\tau_1, +\infty ; L^{2}(\Omega) \right) }+\|\nabla\partial_{t}\sigma^{(n)}\|_{L^{2}\left(\tau_1, +\infty ; L^{2}(\Omega) \right)}\leq C.
$$
Hence, it follows from Poincar\'{e}--Wirtinger inequality and the fact $\langle\partial_t \sigma^{(n)} \rangle=0$ that
\be
\|\partial_{t}\sigma^{(n)}\|_{L^2\left(\tau_1, +\infty ; H^{1}(\Omega) \right)}\leq C.\label{sigt4}
\ee
Besides, from the equation \eqref{f2.b} and the elliptic estimate for Neumann problem, we have
\be
\|\sigma^{(n)}\|_{L^{\infty}\left(\tau_1, +\infty ; H^{2}(\Omega)\right)}
\leq C,\label{sig}
\ee
which implies
\be
\|\sigma^{(n)}\|_{L^{\infty}\left(\tau_1, +\infty ; L^{\infty}(\Omega)\right)}\leq C,\label{sig2}
\ee
The constant $C>0$ in \eqref{sigt4}--\eqref{sig2}  depends on $\tau$ and $\tau_1$, but is independent of $n$.

\textbf{Step 3}.
For the equation
\be
\epsilon\partial_t\varphi^{(n)}- B\Delta \varphi^{(n)} +A\varPsi_0^{\prime}(\varphi^{(n)})=\tilde{h}^{(n)},
\label{me1a}
\ee
with its right-hand side given by
\begin{align}
\tilde{h}^{(n)}
&= \chi\sigma^{(n)} -\chi\overline{\sigma^{(n)}}  + A\overline{\varPsi^{\prime}(\varphi^{(n)})} +\epsilon\overline{\partial_{t}\varphi^{(n)}}  -\mathcal{N}(\partial_{t}\varphi^{(n)} -\overline{\partial_{t}\varphi^{(n)}}) \notag \\
&\quad -\alpha\mathcal{N}(\varphi^{(n)}-\overline{\varphi^{(n)}}) + A\theta_0\varphi^{(n)}.\label{mea}
\end{align}
Thanks to estimates \eqref{uni} and \eqref{sig2}, we have
\be
\|\tilde{h}^{(n)}\|_{L^{\infty}\left(\tau_1, +\infty; L^{\infty}(\Omega)\right)} \leq \tilde{C}_h,
\label{asi}
\ee
where the constant $\tilde{C}_h>0$ is independent of $n$. Consider the initial value problem of ODEs (cf. \eqref{ode1})
\begin{equation}
\begin{cases}
&\epsilon\displaystyle{\frac{d}{dt}}y_\pm^{(n)}(t) +A\varPsi_0^{\prime}(y_\pm^{(n)}(t))=\pm \tilde{C}_h, \quad \forall\, t\in[\tau_1,+\infty),\\
&\\
&y^{(n)}_\pm(\tau_1)=\pm(1-\frac{1}{2n}).
\end{cases}
\label{ode2}
\end{equation}
We infer from \cite[Corollary A.1]{MZ04} that for any
$\tau_2>\tau_1$, there exists a constant $\tilde{\delta}\in (0,1)$ depending on $\tau$, $\tau_1$,  $\tilde{C}_h$ and $\tau_2$, but is independent of $n$, such that
\be
|y^{(n)}_\pm(t)|\le 1-\tilde{\delta}, \quad \forall\, t\geq \tau_2.\notag
\ee
Then by the comparison principle for second-order parabolic equations, we obtain
\be
-1+\tilde{\delta} \leq y^{(n)}_-(t)\leq \varphi^{(n)}(t, x) \leq y_+^{(n)}(t)\le 1-\tilde{\delta}, \quad \forall\, t \geq \tau_2, \ x \in \Omega,\label{sp5a}
\ee
which implies the uniform separation of $\varphi^{(n)}$ from the pure states $\pm 1$ for $t\geq \tau_2$. We note that $\tilde{\delta}$ may depend on $\tau$, $\tau_1$ and $\tau_2$, but is independent of $n$. From \eqref{sp5a} and by a similar argument in the sixth estimate in Section 4, we also obtain
\begin{align}
\|\varphi^{(n)}\|_{L^2(\tau_2,T;H^3(\Omega))}+ \|\sigma^{(n)}\|_{L^2(\tau_2,T;H^3(\Omega))}\leq C_T.
\label{sp6}
\end{align}

\textbf{Step 4}.
Since the estimates \eqref{uni}, \eqref{sigt1}, \eqref{sigt4}--\eqref{sig2}, \eqref{sp5a} and \eqref{sp6} obtained in the previous steps are independent of $n$, then by using a similar compactness argument like in \cite[Section 4]{MT}, we are able to pass to the limit as $n \to+ \infty$ and find a convergent subsequence of $\{(\varphi^{(n)}, \sigma^{(n)},\mu^{(n)})\}$ such that the limit function denoted by $(\hat{\varphi}, \hat{\sigma}, \hat{\mu})$ is indeed a global weak solution to problem \eqref{f1.a}--\eqref{ini0} on $[\tau_0,+\infty)$  subject to the initial datum
$(\varphi(\tau_0), \sigma(\tau_0))$,
with the following regularity properties
\be
\begin{aligned}
    &\hat{\varphi} \in  L^\infty(\tau_0,+\infty ;H^2_N(\Omega)), \\
	&\partial_t\hat{\varphi} \in L^{\infty}(\tau_0,+\infty;L^2(\Omega))\cap L^{2}(\tau_0,+\infty;H^1(\Omega)), \\
	&\hat{\mu} \in   L^{\infty}(\tau_0,+\infty;L^2(\Omega))\cap  L^{2}_{loc}(\tau_0,+\infty;H^1(\Omega)), \\
	&\hat \sigma  \in L^\infty(\tau_0,+\infty;H^1(\Omega))\cap L^{2}_{loc}(\tau_0,+\infty;H^2(\Omega)),\\
&\partial_t\hat \sigma  \in L^{2}_{loc}(\tau_0,+\infty;H^1(\Omega)),
\end{aligned}\notag
\ee
as well as
\begin{align*}
&\hat \sigma \in L^\infty(\tau_1,+\infty; H^2_N(\Omega)),\quad  \partial_t\hat \sigma \in L^2(\tau_1,+\infty; H^1(\Omega),\\
& \hat\varphi\in L^2_{loc}(\tau_2,+\infty; H^3(\Omega)),\quad \hat\sigma\in L^2_{loc}(\tau_2,+\infty; H^3(\Omega)),
\end{align*}
for $\tau_0<\tau_1<\tau_2<\tau$, and in particular,
\be
\|\hat\varphi(t)\|_{L^\infty}\leq 1-\tilde{\delta}, \quad \forall\, t \geq \tau_2.\notag
\ee

Hence, we are now in a position to complete the proof of Corollary \ref{cr}. Thanks to the uniqueness of weak solutions of problem \eqref{f1.a}--\eqref{ini0}, it follows that
$(\varphi, \mu, \sigma)(t)=(\hat{\varphi}, \hat{\sigma}, \hat{\mu})(t)$ for all $t\geq \tau_0$.
Hence, the global weak solution $(\varphi, \mu, \sigma)$  becomes a strong one on the interval $[\tau,+\infty)$. Besides, fixing $\tau_1\in (\frac{\tau}{2}, \frac{2\tau}{3})$ and $\tau_2=\frac{5\tau}{6}$ in the estimates of previous steps, we can obtain the uniform separation property
 \be
\|\varphi(t)\|_{L^\infty}\leq 1-\delta_\tau, \quad \forall\, t \geq \tau.\label{sp5c}
\ee
 where the constant $\delta_\tau\in (0,1)$ depends on $\tau$, $\|\varphi_0\|_{H^1}$, $\|\sigma_0\|$, $1-|\overline{\varphi_0}|$, $\Omega$ and coefficients of the system. The proof of Corollary \ref{cr} is complete.
$\hfill\square$

\subsection{Existence of a global attractor}

Now we prove the existence of a global attractor, which is the unique compact set in a suitable phase space, being invariant under the semigroup generated by the evolution problem and attracting all bounded sets as time goes to infinity (see \cite{T}). \smallskip

\textbf{Proof of Theorem \ref{attr}}.
First, it follows from Theorem \ref{main1} that the global weak solutions to problem \eqref{f1.a}--\eqref{ini0} defines a closed semigroup $\mathcal{S}(t)$ on the phase space $\mathcal{X}_{m_1,m_2}$ in the sense of \cite{PZ07}. Next, we observe that Corollary \ref{cr} implies the asymptotic compactness of $\mathcal{S}(t)$. In particular, let $(\varphi, \sigma)$ be a global weak solution to problem \eqref{f1.a}--\eqref{ini0}, we infer that for any $\tau >0$,
$$
\|\varphi(t)\|_{H^2}+\|\sigma(t)\|_{H^2}\leq C_\tau,\quad \forall\,t\geq \tau,
$$
where the constant $C_\tau>0$ depends on $\|\varphi_0\|_{H^1}$, $\|\sigma_0\|$, $\Omega$, coefficients of the system and $\tau$, but is independent of $t$. From the above estimate, the continuous dependence estimate \eqref{conti2} and a standard interpolation argument (cf. e.g., \cite[Proposition 6.1]{GG}), we see that $\mathcal{S}(t)\in C(\mathcal{X}_{m_1,m_2}, \mathcal{X}_{m_1,m_2})$ for all $t\geq 0$. As a consequence, $\mathcal{S}(t)$ is actually a strongly continuous
semigroup on $\mathcal{X}_{m_1,m_2}$. Finally, we note that the estimates \eqref{aver}, \eqref{1aver}, \eqref{phi} together with the dissipative estimate  \eqref{diss1} implies the dissipativity property of $\mathcal{S}(t)$ in $\mathcal{X}_{m_1,m_2}$. Namely,
 $\mathcal{S}(t)$ possesses a bounded absorbing ball $\mathcal{B}_0 \subset \mathcal{X}_{m_1,m_2}$, whose radius depends on $\Omega$, $m_1$, $m_2$ and coefficients of the system but is independent of the initial data. For
every bounded set $\mathcal{B} \subset \mathcal{X}_{m_1,m_2}$, there exists a time $t_0 = t_0(\mathcal{B}) > 0$ such that $\mathcal{S}(t)\mathcal{B}\subset \mathcal{B}_0$ for all $t\geq t_0$.

Thanks to the above facts, by applying a standard argument in the theory of infinite dimensional dynamical systems \cite{T}, we can conclude that the dynamical system  $(\mathcal{S}(t),\mathcal{X}_{m_1,m_2})$ defined by problem \eqref{f1.a}--\eqref{ini0} admits a global attractor $\mathcal{A}_{m_1,m_2}\subset \mathcal{X}_{m_1,m_2}$ that is bounded in $H^2(\Omega)\times H^2(\Omega)$. The proof of Theorem \ref{attr} is complete. \hfill $\square$

\subsection{The $\omega$-limit set}

Finally, we proceed to characterize the $\omega$-limit set of an arbitrary given initial datum $(\varphi_0, \sigma_0)$ belonging to the set
$$
\mathcal{Z}=\left\{(z_1,z_2) \in H^{1}(\Omega)\times L^{2}(\Omega):\|  z_1 \|_{L^{\infty}} \le 1,\
|\overline{ z_1}|<1\right\}.
$$
Denoting $\mathcal{H}=H^{2r}(\Omega)\times H^{2r}(\Omega)$ with $r\in (1/2,1)$, we define the $\omega$-limit set of  $(\varphi_{0},\sigma_0)$ as
\begin{align}
\omega\left(\varphi_{0},\sigma_0\right)
&=\big\{(z_1,z_2) \in \mathcal{Z}: \exists\,\left\{t_{n}\right\} \nearrow +\infty\  \text { s.t. } \left(\varphi\left(t_{n}\right),\sigma \left(t_{n}\right)\right) \rightarrow (z_1,z_2)\ \text{ strongly in } \mathcal{H} \big\},
\label{df}
\end{align}
where $(\varphi, \sigma)$ is the unique global weak solution corresponding to $(\varphi_0,\sigma_0)$. Then we have

\begin{proposition} \label{sta}
Suppose that (H1)--(H2) are satisfied.
For any initial datum $(\varphi_{0},\sigma_0) \in \mathcal{Z}$, its $\omega$-limit set $\omega(\varphi_{0},\sigma_0)$ is  nonempty and compact in $\mathcal{H}$. Moreover,

(1) $\omega(\varphi_0,\sigma_0)$ consists of stationary points $(\varphi_{\infty}, \sigma_{\infty})$ only, where $(\varphi_{\infty}, \sigma_{\infty}) \in H^2_N(\Omega)\times H^2_N(\Omega)$ is a strong solution to the stationary problem \eqref{5bchv}--\eqref{5dchv}, satisfying $(\overline{\varphi_\infty},\overline{\sigma_\infty}) =(c_0,\overline{\sigma_0})$. In particular, there exists a constant $\delta\in (0,1)$ such that
	\be
	\|\varphi_{\infty}\|_{L^{\infty}}\le 1-\delta,\label{ps1}
	\ee
where $\delta$ is independent of $\varphi_\infty$.

(2) $\omega(\varphi_{0},\sigma_0)$ is invariant under the semigroup $\mathcal{S}(t)$ defined by the global weak solution to problem \eqref{f1.a}--\eqref{ini0}, that is, $\mathcal{S}(t)\omega(\varphi_{0},\sigma_0) =\omega(\varphi_{0},\sigma_0)$ for all $t\geq 0$.
\end{proposition}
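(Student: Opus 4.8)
The plan is to combine the instantaneous regularization of Corollary~\ref{cr} with the Lyapunov structure established in the Fifth estimate of Section~\ref{ws}. First I would fix $\tau=1$ and invoke Corollary~\ref{cr}, which provides a uniform-in-time bound $\|\varphi(t)\|_{H^2}+\|\sigma(t)\|_{H^2}\le C$ for all $t\ge 1$, together with the uniform separation $\|\varphi(t)\|_{L^\infty}\le 1-\delta$ for $t\ge 1$ coming from \eqref{sp5c}. Since the embedding $H^2(\Omega)\hookrightarrow\hookrightarrow H^{2r}(\Omega)$ is compact for $r\in(1/2,1)$, the trajectory $\{(\varphi(t),\sigma(t)):t\ge 1\}$ is relatively compact in $\mathcal{H}$; hence $\omega(\varphi_0,\sigma_0)$ is nonempty, and as the usual intersection of the closures of the trajectory tails it is compact in $\mathcal{H}$. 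Any limit point $(\varphi_\infty,\sigma_\infty)$ is an $\mathcal{H}$-limit of some $(\varphi(t_n),\sigma(t_n))$; strong $H^{2r}$-convergence gives a.e.\ convergence along a subsequence, so the separation bound passes to the limit to yield \eqref{ps1} with $\delta$ independent of the limit point, while \eqref{aver} and \eqref{1aver} give $\overline{\varphi_\infty}=c_0$ and $\overline{\sigma_\infty}=\overline{\sigma_0}$.

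Next I would use that $t\mapsto \mathcal{F}(t)+\tfrac{C}{\alpha}e^{-\alpha t}$ is nonincreasing and bounded below by \eqref{i6}, hence converges as $t\to+\infty$, and that integrating \eqref{i6} yields $\int_0^{+\infty}\mathcal{G}(\tau)\,d\tau<+\infty$ (cf.\ \eqref{menergy1}, \eqref{phit}, \eqref{phit2}). In particular $\int_{t_n}^{t_n+T}\mathcal{G}(\tau)\,d\tau\to 0$ for every fixed $T>0$, so the dissipation vanishes asymptotically along any diverging time sequence.

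To identify each limit point as an equilibrium, I would pass to the limit in the time-shifted trajectories $\varphi_n(\cdot):=\varphi(\cdot+t_n)$, $\sigma_n(\cdot):=\sigma(\cdot+t_n)$ on a fixed interval $[0,T]$. By Corollary~\ref{cr} these are bounded, uniformly in $n$, in the strong-solution class, so up to a subsequence they converge to a limit $(\varphi_*,\sigma_*)$ that still solves the system on $[0,T]$; crucially, the uniform separation keeps $\varPsi'(\varphi_n)$ and $\varPsi''(\varphi_n)$ bounded, so the nonlinear potential and the nonlocal term $\mathcal{N}(\varphi_n-\overline{\varphi_n})$ pass to the limit. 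The vanishing of the shifted dissipation forces $\partial_t\varphi_*\equiv 0$ and $\nabla(\sigma_*-\chi\varphi_*)\equiv 0$ on $[0,T]$; combined with $\varphi_n(0)=\varphi(t_n)\to\varphi_\infty$ and $\sigma_n(0)\to\sigma_\infty$, this shows $(\varphi_*,\sigma_*)$ is time-independent and equals $(\varphi_\infty,\sigma_\infty)$. Evaluating the rewritten equation \eqref{reeq} with $\partial_t\varphi=0$ and $\overline{\varphi}=c_0$ then produces \eqref{5bchv}, while $\nabla(\sigma_\infty-\chi\varphi_\infty)\equiv 0$ gives \eqref{5f2.b} and the mean relations give \eqref{5dchv}, so part~(1) follows.

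Finally, for part~(2), since part~(1) shows that $\omega(\varphi_0,\sigma_0)$ consists only of equilibria and each equilibrium is a time-independent weak solution, the uniqueness in Theorem~\ref{main1} forces $\mathcal{S}(t)(\varphi_\infty,\sigma_\infty)=(\varphi_\infty,\sigma_\infty)$ for all $t\ge 0$, whence $\mathcal{S}(t)\omega(\varphi_0,\sigma_0)=\omega(\varphi_0,\sigma_0)$ at once; alternatively one runs the standard invariance argument using the strong continuity of $\mathcal{S}(t)$ on $\mathcal{X}_{m_1,m_2}$ established in the proof of Theorem~\ref{attr}. I expect the main obstacle to be the passage to the limit in the shifted trajectories of the third step: securing convergence strong enough to handle the singular potential and the nonlocal operator $\mathcal{N}$ simultaneously, and confirming that the limit is genuinely time-independent. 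The decisive tool there is the instantaneous uniform separation \eqref{sp5c}, which turns $\varPsi'$ into an effectively Lipschitz nonlinearity on the range of the approximating phase fields.
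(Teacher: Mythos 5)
Your proposal is correct, and it follows the paper's overall skeleton: Corollary~\ref{cr} for the uniform $H^2$ bounds, uniform separation and hence nonemptiness/compactness of the $\omega$-limit set in $\mathcal{H}$; \eqref{aver}, \eqref{1aver} for the mean values; the Lyapunov inequality \eqref{i6} to force the dissipation $\mathcal{G}$ to vanish along time windows; the separation property to tame $\varPsi'$; and stationarity plus uniqueness of weak solutions for the invariance in part (2). The one place you genuinely diverge is the identification of limit points as equilibria. The paper never extracts a limiting trajectory: after integrating \eqref{i6} over $[t_n,t_{n+1}]$ it upgrades the vanishing dissipation to \emph{uniform} convergence $\|(\varphi(t_n+s),\sigma(t_n+s))-(\varphi_\infty,\sigma_\infty)\|_{\mathcal{H}}\to 0$ for $s\in[0,1]$ by interpolating against the $H^2$ bound (see \eqref{si2}), then tests the equation with a fixed $\xi\in H^1(\Omega)$, averages in $s$ over $[0,1]$, and passes to the limit by dominated convergence, the $\nabla\tilde{\mu}$ and $\partial_t\varphi$ contributions being annihilated by Cauchy--Schwarz against the vanishing dissipation integrals. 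You instead take subsequential limits of the translated trajectories $(\varphi(\cdot+t_n),\sigma(\cdot+t_n))$ on $[0,T]$, use weak lower semicontinuity to obtain $\partial_t\varphi_*\equiv 0$ and $\nabla(\sigma_*-\chi\varphi_*)\equiv 0$, and read \eqref{5bchv} off the pointwise identity \eqref{reeq} (note \eqref{reeq} also absorbs the $\mathcal{N}(\partial_t\varphi-\overline{\partial_t\varphi})$ term once $\partial_t\varphi_*=0$, so you do not need the vanishing of $\nabla\tilde{\mu}$ separately). Both routes work: yours requires a bit more compactness bookkeeping (an Aubin--Lions-type argument for the shifted solutions and a verification that the limit actually solves the system, which the uniform separation renders routine), while the paper's time-averaged weak-form passage avoids constructing a limiting trajectory at the cost of the interpolation step \eqref{si2}; your translation-compactness argument is arguably more modular and also yields the stationarity of the entire limiting trajectory, including the $\sigma$-equation \eqref{5f2.b}, in one stroke.
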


\begin{proof}
For any initial datum $(\varphi_0, \sigma_0)\in \mathcal{Z}$, we denote the associated unique global weak solution by $(\varphi, \sigma)$.
From Corollary \ref{cr}, we see that for any fixed $\tau>0$, it holds
 $\varphi \in C([\tau,+\infty); H^{2}(\Omega))$, $\sigma \in C([\tau,+\infty); H^{2}(\Omega))$, being uniformly bounded for all $t\geq \tau$. Hence, it follows from the Sobolev embedding theorem that the orbit $\{\left(\varphi(t),\sigma(t)\right)\}_{t \geq \tau}$ is relatively compact in $\mathcal{H}$, which implies that $\omega\left(\varphi_{0},\sigma_0\right)$ is a nonempty, bounded subset in $H^{2}(\Omega)\times H^{2}(\Omega)$ and thus compact in $\mathcal{H}$.

Then for any cluster point $(\varphi_\infty,\sigma_\infty) \in \omega\left(\varphi_{0},\sigma_0\right)$, we see that $(\varphi_\infty,\sigma_\infty) \in H^{2}(\Omega)\times H^{2}(\Omega)$ and $\overline{ \varphi_\infty}=c_0$, $\overline{\sigma_\infty} =\overline{ \sigma_0}$. Following the argument in \cite{CGW14,HP}, we are able to show that $(\varphi_\infty,\sigma_\infty)$ is indeed a solution to the stationary problem \eqref{5bchv}--\eqref{5dchv}.
To this end, we recall \eqref{i6}, which implies that $\mathcal{F}(t) +\frac{C}{\alpha} e^{-\alpha t}$ is indeed a strict Lyapunov function for problem \eqref{f1.a}--\eqref{ini0}. Since it is non-increasing in time and bounded from below, there exists some constant $\mathcal{F}_{\infty}\in\mathbb{R}$ such that
\be
\lim _{t \rightarrow+\infty}{\mathcal{F}}(t)=\lim _{t \rightarrow+\infty}\left({\mathcal{F}}(t)+\frac{C}{\alpha} e^{-\alpha t}\right)
=\mathcal{F}_{\infty}.
\label{m1}
\ee
Let $\{t_{n}\}$ be an unbounded increasing sequence such that
\begin{align}
(\varphi(t_n),\sigma(t_n)) \rightarrow (\varphi_{\infty},\sigma_\infty) &\quad  \text { strongly in } \mathcal{H},\notag\\
(\varphi(t_n),\sigma(t_n)) \rightarrow (\varphi_{\infty},\sigma_\infty)  &\quad  \text { weakly in } H^{2}(\Omega)\times H^2(\Omega).\notag
\end{align}
For any $\tau\in(0,+\infty)$, we recall Corollary \ref{cr} such that
\be
\|\varphi(t)\|_{L^{\infty}}\le 1-\delta(\tau), \quad \forall\, t\in [\tau,+\infty), \label{sepp}
\ee
which together with the Sobolev embedding theorem $H^{2r}\hookrightarrow L^\infty$ (for $r\in (1/2,1)$) yields the conclusion \eqref{ps1}. Without loss of generality, we assume $t_{n+1} \geq t_{n}+1$ for $n \in \mathbb{N}$. Integrating the inequality \eqref{i6} on the time interval $\left[t_{n}, t_{n+1}\right]$, we obtain
\be
\begin{aligned}
&\mathcal{F}\left(t_{n+1}\right)
-\mathcal{F}(t_{n}) +\frac12\int_{t_{n}}^{t_{n+1}}\|\nabla \tilde{\mu}(s)\|^{2} +\|\nabla (\sigma(s)-\chi\varphi(s))\|^2 +\epsilon\|\partial_t\varphi(s)\|^2\, d s \\
&\quad\leq \frac{C}{\alpha} e^{-\alpha t_n}-\frac{C}{\alpha} e^{-\alpha t_{n+1}},
\label{eq}
\end{aligned}
\ee
which implies
\be
\lim_{n\to+\infty}\int_{t_{n}}^{t_{n+1}}\|\nabla \tilde{\mu}(s)\|^{2} +\|\partial_t\sigma(s)\|^2_{V_0'} +\epsilon\|\partial_t\varphi(s)\|^2\, d s=0.
\label{m3}
\ee
As a result, we have for $n\to\infty$
\begin{align*}
&\|\varphi\left(t_{n}+s_{1}\right) -\varphi\left(t_{n}+s_{2}\right)\| \rightarrow 0, \quad\ \ \, \text { uniformly for all } s_{1}, s_{2} \in[0,1],\\
&\|\sigma\left(t_{n}+s_{1}\right) -\sigma\left(t_{n}+s_{2}\right)\|_{V_0'} \rightarrow 0, \quad \text { uniformly for all } s_{1}, s_{2} \in[0,1].
\end{align*}
Combining the above facts with the boundedness of $(\varphi(t),\sigma(t))$ in $H^2\times H^2$, by a standard interpolation we infer that
\begin{equation}
\|(\varphi (t_{n}+s),\sigma(t_{n}+s)) -(\varphi_\infty,\sigma_\infty)\|_{\mathcal{H}} \rightarrow 0, \quad \text { uniformly for all } s  \in[0,1].
\label{si2}
\end{equation}
Moreover, the strict separation property \eqref{sepp} enables us to regard the singular potential $\varPsi$ as a globally Lipschitz function when $n$ is large. Denote
$\tilde{t}_n:=t_{n}+s$ with $s\in[0,1]$.
Then for any $\xi \in H^1(\Omega)$, we deduce from Lebesgue's dominated convergence theorem that
\begin{align}
&\int_{\Omega}B\nabla \varphi_\infty \cdot \nabla \xi +A\varPsi^{\prime}(\varphi_\infty) \xi -A\overline{\varPsi^{\prime}(\varphi_\infty)}\xi
-\chi \sigma_\infty\xi +\chi\overline{\sigma_\infty}\xi +\alpha \mathcal{N}(\varphi_\infty -\overline{\varphi_\infty})\xi\ d x \notag\\
&\quad=
\lim _{n \rightarrow+\infty}
\int_{0}^{1} \int_{\Omega} B\nabla \varphi(\tilde{t}_n) \cdot \nabla \xi +A\varPsi^{\prime}(\varphi(\tilde{t}_n)) \xi -A\overline{\varPsi^{\prime}(\varphi(\tilde{t}_n))}\,\xi \notag\\
&\qquad\qquad\qquad\qquad
-\chi \sigma(\tilde{t}_n)\xi +\chi\overline{\sigma(\tilde{t}_n)}\xi
+\alpha \mathcal{N}(\varphi(\tilde{t}_n)
-\overline{ \varphi(\tilde{t}_n)}\,)\, \xi\, d x\, d s \notag\\
&\quad
= \lim _{n \rightarrow+\infty}
\int_{0}^{1} \int_{\Omega} \big( \tilde{\mu}(\tilde{t}_n) -\overline{\tilde{\mu}(\tilde{t}_n)}
-\epsilon\partial_t\varphi(\tilde{t}_n) +\epsilon\overline{\partial_t{\varphi}(\tilde{t}_n)}\,\big) \xi\, d x\, d s \notag \\
&\quad \leq
\lim _{n \rightarrow +\infty} \int_{0}^{1} \|\tilde{\mu}(\tilde{t}_n) -\overline{\tilde{\mu}(\tilde{t}_n)}\| \|\xi\|\, d s +\epsilon \int_{0}^{1} \|\partial_t\varphi (\tilde{t}_n)-\overline{\partial_t\varphi(\tilde{t}_n)}\| \|\xi\|\, d s \notag \\
&\quad \leq \lim _{n \rightarrow+\infty} \left(\int_{0}^{1}\|\nabla \tilde{\mu}(\tilde{t}_n)\|^{2}\, d s\right)^{\frac{1}{2}}\|\xi\|  + C\epsilon \left(\int_{0}^{1}\|\partial_t\varphi(\tilde{t}_n)\|^{2} d s\right)^{\frac{1}{2}}\|\xi\| \notag\\
&\quad=0 .\notag
\end{align}
From \eqref{eq}, we also infer that
\be
\int_{\Omega}\nabla (\sigma_\infty -\chi\varphi_\infty)\cdot \nabla\xi\,d x=0,\quad \forall\,\xi \in H^1(\Omega). \notag
\ee
As a consequence, $(\varphi_\infty,\sigma_\infty)$ is a weak solution to the stationary problem \eqref{5bchv}--\eqref{5dchv} with $\mu_\infty =A\overline{\varPsi^{\prime}(\varphi_{\infty})} -\chi\overline{\sigma_{\infty}}$. Since we already know that $(\varphi_\infty,\sigma_\infty)\in H^2(\Omega)\times H^2(\Omega)$, then it is also a strong solution.

Finally, since every $(\varphi_\infty,\sigma_\infty)\in \omega(\varphi_0,\sigma_0)$ is a stationary point, i.e., $\mathcal{S}(t)(\varphi_\infty,\sigma_\infty) =(\varphi_\infty,\sigma_\infty)$ for $t\geq 0$, this yields the invariance of $\omega(\varphi_0, \sigma_0)$ under $\mathcal{S}(t)$. The proof is complete.
\end{proof}
\smallskip

\textbf{Proof of Theorem \ref{main4}}. It is straightforward to check that the conclusion of Theorem \ref{main4} is an immediate consequence of Proposition \ref{sta}. \hfill $\square$ \smallskip

We note that Proposition \ref{sta} also provides a dynamical approach for the study of the stationary problem \eqref{5bchv}--\eqref{5dchv}, which is independent of the viscous parameter $\epsilon$.

\begin{corollary}\label{sta2}
Let $(\varphi_{*},\sigma_{*})\in \mathcal{Z}$  be a weak solution of the stationary problem \eqref{5bchv}--\eqref{5dchv}. Then $(\varphi_{*},\sigma_{*})\in H^2_N(\Omega)\times H^2_N(\Omega)$ and there exists a positive constant $\delta\in (0,1)$ such that the strict separation property \eqref{ps1} holds for $\varphi_*$.
\end{corollary}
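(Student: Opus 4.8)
The plan is to exploit the remark preceding the statement: a weak solution of the stationary system generates a constant-in-time orbit of the viscous evolution, so that the instantaneous regularization and separation already established in Corollary \ref{cr} (equivalently, Proposition \ref{sta}) can be transferred back to $(\varphi_*,\sigma_*)$. The one genuinely circular-looking point is that, in order to invoke uniqueness of weak solutions, the constant orbit must itself qualify as a weak solution in the sense of Definition \ref{maind}, which requires $\varphi_*\in H^2_N(\Omega)$ and $\mu_*\in H^1(\Omega)$; these must therefore be obtained first, directly from the stationary equations.

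First I would eliminate $\sigma_*$. Testing \eqref{5f2.b} shows that $\sigma_*-\chi\varphi_*$ is harmonic with homogeneous Neumann data, hence constant, so $\sigma_*=\chi\varphi_*+(\overline{\sigma_*}-\chi c_0)$; in particular $\sigma_*\in H^1(\Omega)\cap L^\infty(\Omega)$ and it is slaved to $\varphi_*$. Substituting into \eqref{5bchv} and writing $\varPsi'=\varPsi_0'-\theta_0\,\mathrm{id}$, the phase field solves the elliptic Neumann problem
\[
-B\Delta\varphi_*+A\varPsi_0'(\varphi_*)=\widetilde g:=A\theta_0\varphi_*+\chi\sigma_*-\alpha\mathcal{N}(\varphi_*-c_0)+\overline{\mu_*},
\]
with $\overline{\mu_*}=A\overline{\varPsi'(\varphi_*)}-\chi\overline{\sigma_*}$. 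Since $\mathcal{N}(\varphi_*-c_0)\in H^2(\Omega)\hookrightarrow L^\infty(\Omega)$ and $\sigma_*\in L^\infty(\Omega)$, we have $\widetilde g\in L^2(\Omega)$. Then, working with the Yosida-type approximation $\varPsi_{0,\kappa}'$ of \eqref{vPsi} and testing with $\varPsi_{0,\kappa}'(\varphi_*)$, the monotonicity bound $\varPsi_0''\ge\theta>0$ yields $\|\varPsi_{0,\kappa}'(\varphi_*)\|\le A^{-1}\|\widetilde g\|$ uniformly in $\kappa$; passing to the limit gives $\varPsi_0'(\varphi_*)\in L^2(\Omega)$, whence $\Delta\varphi_*\in L^2(\Omega)$ and, by elliptic regularity for the Neumann problem, $\varphi_*\in H^2_N(\Omega)$ and consequently $\sigma_*\in H^2_N(\Omega)$. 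Setting $\mu_*:=A\varPsi'(\varphi_*)-B\Delta\varphi_*-\chi\sigma_*\in L^2(\Omega)$, equation \eqref{5bchv} reads $\Delta\mu_*=\alpha(\varphi_*-c_0)$ with $\partial_{\bm n}\mu_*=0$, so $\mu_*\in H^2(\Omega)\subset H^1(\Omega)$. This proves the asserted $H^2$-regularity and, importantly, makes the constant triple admissible.

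Next I would verify that the time-independent triple $(\varphi,\mu,\sigma)(t)\equiv(\varphi_*,\mu_*,\sigma_*)$ meets all the regularity requirements of Definition \ref{maind} and satisfies the three weak identities \eqref{test1.a}--\eqref{test2.b} with $\partial_t\varphi=\partial_t\sigma=0$: indeed \eqref{test4.d} holds by the definition of $\mu_*$, identity \eqref{test1.a} is exactly $(\nabla\mu_*,\nabla\xi)=-\alpha(\varphi_*-c_0,\xi)$, the weak form of $\Delta\mu_*=\alpha(\varphi_*-c_0)$ established above, and \eqref{test2.b} follows from $\sigma_*-\chi\varphi_*$ being constant. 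Thus this constant is a global weak solution with initial datum $(\varphi_*,\sigma_*)\in\mathcal{Z}$ (note that \eqref{5dchv} forces $\overline{\varphi_*}=c_0$, hence $|\overline{\varphi_*}|<1$). By the continuous-dependence estimate of Theorem \ref{main1}, weak solutions are unique, so the weak solution emanating from $(\varphi_*,\sigma_*)$ coincides with this constant for all $t\ge 0$.

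Finally, I would apply Corollary \ref{cr} to this orbit: fixing any $\tau>0$, the weak solution is strong on $[\tau,+\infty)$ and $\|\varphi(t)\|_{L^\infty}\le 1-\delta_\tau$ for all $t\ge\tau$. Since $\varphi(t)\equiv\varphi_*$, evaluating at $t=\tau$ gives $\|\varphi_*\|_{L^\infty}\le 1-\delta$ with $\delta:=\delta_\tau\in(0,1)$, which is \eqref{ps1}; equivalently, the constancy shows $(\varphi_*,\sigma_*)\in\omega(\varphi_*,\sigma_*)$, so Proposition \ref{sta}(1) applies directly and yields a separation constant that, through the uniform bounds on the absorbing set, can be taken independent of the particular stationary solution. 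The hard part is precisely the elliptic bootstrap of the first step: one cannot read off strict separation from $\varphi_*\in H^2\hookrightarrow C(\overline\Omega)$ alone, which only gives $|\varphi_*|\le 1$ because the logarithmic singularity is $L^2$-integrable even at the pure states, so the regularity argument for the singular potential must be carried out carefully in order to license the dynamical comparison. Note also that, since the stationary system \eqref{5bchv}--\eqref{5dchv} does not involve $\epsilon$, the conclusion holds irrespectively of the viscosity used to run the auxiliary flow.
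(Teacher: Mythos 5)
Your proposal is correct and takes essentially the same route as the paper: regard $(\varphi_*,\sigma_*)$ as a constant-in-time global weak solution of the evolution problem \eqref{f1.a}--\eqref{ini0}, use the uniqueness from Theorem \ref{main1} to conclude $\mathcal{S}(t)(\varphi_*,\sigma_*)=(\varphi_*,\sigma_*)$, hence $(\varphi_*,\sigma_*)\in\omega(\varphi_*,\sigma_*)$, and then apply Proposition \ref{sta} (equivalently Corollary \ref{cr}). The only difference is that the paper declares the admissibility of the constant orbit ``obvious'' (citing \cite{MZ04}), whereas you explicitly justify it via the elliptic bootstrap ($\sigma_*$ slaved to $\varphi_*$ through \eqref{5f2.b}, the uniform bound on $\varPsi'_{0,\kappa}(\varphi_*)$ from the approximation \eqref{vPsi} giving $\varPsi_0'(\varphi_*)\in L^2(\Omega)$, hence $\varphi_*,\sigma_*\in H^2_N(\Omega)$ and $\mu_*\in H^1(\Omega)$) --- a correct filling-in of the step the paper leaves implicit.
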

\begin{proof}
It is obvious that $(\varphi_{*},\sigma_{*})$ can be viewed as a global weak solution to the evolution problem \eqref{f1.a}--\eqref{ini0} with the particular choice of initial datum $(\varphi_0, \sigma_0)=(\varphi_{*}, \sigma_{*})$ (cf. \cite{MZ04} for the viscous Cahn--Hilliard equation). Thanks to the uniqueness of weak solutions (recall \eqref{conti2}), we have $\mathcal{S}(t)(\varphi_{*},\sigma_{*}) =(\varphi_{*},\sigma_{*})$ for all $t\geq 0$.
Therefore, we can infer that $(\varphi_{*},\sigma_{*}) \in \omega(\varphi_{*},\sigma_{*})$ and then the conclusion follows from Proposition \ref{sta}.
\end{proof}
\medskip


\noindent
\textbf{Acknowledgments}. \noindent The author would like to thank Professor Hao Wu for helpful discussions.


\end{document}